\newcommand*\bb{\mathbb}
\newcommand *\w{^\wedge}
\newcommand*\de{\partial}
\newcommand{\vo}{\vec{o}\@ifnextchar{^}{\,}{}}
\def\YYint#1#2#3{{\setbox0=\hbox{$#1{#2#3}{\iint}$}
    \vcenter{\hbox{$#2#3$}}\kern-.50\wd0}}
\def\XXint#1#2#3{{\setbox0=\hbox{$#1{#2#3}{\int}$}
    \vcenter{\hbox{$#2#3$}}\kern-.50\wd0}}
\def\namedlabel#1#2{\begingroup
   \def\@currentlabel{#2}%
   \label{#1}\endgroup
}
\newcommand{\rmh}[1]{\mathpalette{\raisem@th{#1}}}
\newcommand{\raisem@th}[3]{\hspace*{-1pt}\raisebox{#1}{$#2#3$}}
\newcommand{\descitem}[2]{\item[{(#1):}]\label{#2}}
\newcommand{\descref}[2]{\hyperref[#1]{\textnormal{\textcolor{black}{(}\textcolor{blue}{\bf #2}\textcolor{black}{)}}}}
\newcommand{\dref}[2]{\hyperref[#1]{\textcolor{black}{(}\textcolor{blue}{\bf #2}\textcolor{black}{)}}}
\newcommand{\tk}{\tilde{k}}
\newcommand{\tw}{\tilde{w}}
\newcommand\RR{\mathbb{R}}
\newcommand\NN{\mathbb{N}}
\newcommand{\ve}{\varepsilon}
\newcommand{\tht}{\theta}
\newcommand{\Om}{\Omega}
\g@addto@macro\normalsize{%
  \setlength\abovedisplayskip{2pt}
  \setlength\belowdisplayskip{2pt}
  \setlength\abovedisplayshortskip{4pt}
  \setlength\belowdisplayshortskip{4pt}
}
\numberwithin{equation}{section}
\crefname{section}{Section}{Sections}
\crefname{subsection}{Subsection}{Subsections}
\crefname{condition}{Condition}{Conditions}
\crefname{hypothesis}{Hypothesis}{Hypothesis}
\crefname{assumption}{Assumption}{Assumptions}
\crefname{lemma}{Lemma}{Lemmas}
\crefname{claim}{Claim}{Claims}
\crefname{remark}{Remark}{Remarks}
\newtheorem{theorem}{Theorem}[section]
\newtheorem{lemma}[theorem]{Lemma}
\newtheorem{definition}[theorem]{Definition}
\newtheorem{remark}[theorem]{Remark}        
\numberwithin{equation}{section}
\newlist{steps}{enumerate}{1}
\setlist[steps, 1]{label = \textcolor{Cerulean}{Step \arabic*:}}
\def\ps@pprintTitle{%
	\let\@oddhead\@empty
	\let\@evenhead\@empty
	\def\@oddfoot{}%
	\let\@evenfoot\@oddfoot}
\DeclarePairedDelimiterX{\inp}[2]{\langle}{\rangle}{#1, #2}
\newcommand{\norm}[1]{\left\lVert#1\right\rVert}
\begin{document}

\begin{frontmatter}
\title{Local boundedness of variational solutions to nonlocal double phase parabolic equations}
\author[myaddress]{Harsh Prasad\corref{mycorrespondingauthor}}
\ead{harsh@tifrbng.res.in}

\author[myaddress]{Vivek Tewary}
\ead{vivektewary@gmail.com and vivek2020@tifrbng.res.in}


\address[myaddress]{Tata Institute of Fundamental Research, Centre for Applicable Mathematics, Bangalore, Karnataka, 560065, India}
\cortext[mycorrespondingauthor]{Corresponding author}
\begin{abstract}
    We prove local boundedness of variational solutions to the double phase equation 
    		    \begin{align*}
        \de_t u + P.V.\int_{\mathbb{R}^N}& \frac{|u(x,t)-u(y,t)|^{p-2}(u(x,t)-u(y,t))}{|x-y|^{N+ps}}\\
        &+a(x,y)\frac{|u(x,t)-u(y,t)|^{q-2}(u(x,t)-u(y,t))}{|x-y|^{N+qs'}} \,dy = 0,
   		    \end{align*}	 
   	 under the restrictions $s,s'\in (0,1),\, 1 < p \leq q \leq p\,\frac{2s+N}{N}$ and the non-negative function $(x,y)\mapsto a(x,y)$ is assumed to be measurable and bounded. 
\end{abstract}
    \begin{keyword} Nonlocal operators with nonstandard growth; Parabolic minimizers; Boundedness
    \MSC[2010] 35K51, 35A01, 35A15, 35R11.
    \end{keyword}

\end{frontmatter}
\begin{singlespace}
\tableofcontents
\end{singlespace}

\section{Introduction}

\subsection{The problem}
In this article, we are interested to study regularity theory for a double phase nonlocal parabolic equations where the nonlocal operator is modelled on the fractional $p$-Laplacian. In particular, we prove local boundedness for the following double phase equation.
\begin{align}\label{maineq}
\de_t u + P.V.\int_{\mathbb{R}^N} & \frac{|u(x,t)-u(y,t)|^{p-2}(u(x,t)-u(y,t))}{|x-y|^{N+ps}}\nonumber\\
&+a(x,y)\frac{|u(x,t)-u(y,t)|^{q-2}(u(x,t)-u(y,t))}{|x-y|^{N+qs'}} \,dy = 0.
\end{align}

However, in order to make sense of a weak solution for the above equation, we require the membership $u\in L^q(0,T;W^{s',q}(\RR^N))$. This is not guaranteed by existence theory. A suitable notion to study such evolution problems is that of variational solutions which was introduced by Lichnewsky and Temam in~\cite{lichnewskyPseudosolutionsTimedependentMinimal1978} for evolutionary minimal surface equations. This notion was extended to quasilinear parabolic equations with non-standard growth in~\cite{bogeleinExistenceEvolutionaryVariational2014}. In the companion paper~\cite{prasadExistenceVariationalSolutions2021a}, the present authors have extended the notion to nonlocal parabolic equations satisfying only a growth condition from below through convex minimization of approximating functionals. In particular, it covers the double phase equation that we study in this paper. 

In this paper, we prove local boundedness of variational solutions under the following restrictions on the gap:
\begin{align}
    \label{restriction}
    p \leq q \leq p\,\frac{2s+N}{N}:=p_*.
\end{align}
The restriction above is due to the fractional Sobolev embedding. The proof is divided into the super-critical case, viz.,
\begin{align}
    \label{restriction1}
    \frac{2N}{2s+N} < p < \infty,
\end{align} and the sub-critical case
\begin{align}
    \label{restriction2}
    1 < p \leq \frac{2N}{2s+N},
\end{align} where the latter range requires an additional integrability assumption on the solution.

Since the variational solution $u$ may not belong to $L^q(0,T;W^{s',q}(\RR^N))$, it is not clear whether Steklov averages can be used to obtain the Caccioppoli inequality. For this reason, we assume that $\de_t u\in L^2(\Om_T)$, which is guaranteed for time-independent initial-boundary data, as proved in~\cite{prasadExistenceVariationalSolutions2021a}. 

There are two main novelties to this paper. Firstly, our approach is purely variational. Local boundedness and H\"older regularity for minimizers of functionals satisfying certain growth conditions was proved by Giaquinta and Giusti in \cite{giaquintaRegularityMinimaVariational1982}. Their method was extended to the nonlocal case by Cozzi in \cite{cozziRegularityResultsHarnack2017}. We prove a Caccioppoli inequality for parabolic minimizers of a fractional evolution equation. Secondly, the variational approach is a natural framework to consider problems with non-standard growth.  

\subsection{Background}

The regularity theory of fractional $p$-Laplace equations and their parabolic counterparts has seen a great surge ever since the De Giorgi-Nash-Moser theory was extended to the nonlocal case in~\cite{dicastroLocalBehaviorFractional2016} and \cite{cozziRegularityResultsHarnack2017}. Other relevant papers include~\cite{brascoHigherSobolevRegularity2017,brascoHigherHolderRegularity2018,brascoContinuitySolutionsNonlinear2021,kuusiFractionalGehringLemma2014}

The regularity theory of $p,q$ growth problems was started by Marcellini in a series of novel papers~\cite{marcelliniRegularityMinimizersIntegrals1989,marcelliniRegularityExistenceSolutions1991,marcelliniRegularityEllipticEquations1993,marcelliniRegularityScalarVariational1996}. There is a large body of work dealing with problems of $(p,q)$-growth as well as other nonstandard growth problems, for which we point to the surveys in \cite{marcelliniRegularityGeneralGrowth2020,mingioneRecentDevelopmentsProblems2021}. Local boundedness for minimizers of functionals satisfying a non-standard growth condition was studied in~\cite{moscarielloHolderContinuityMinimizers1991,fuscoLocalBoundednessMinimizers1990,marcelliniRegularityMinimizersIntegrals1989}. These results were made sharp in~\cite{hirschGrowthConditionsRegularity2020}. For parabolic equations, comparable results may be found in~\cite{yuBoundednessSolutionsParabolic1997,singerLocalBoundednessVariational2016}. For nonlocal equations of nonstandard growth, we refer to \cite{scottSelfimprovingInequalitiesBounded2020,byunOlderRegularityWeak2021,chakerRegularitySolutionsAnisotropic2020,chakerRegularityEstimatesFractional2021,chakerNonlocalOperatorsSingular2020,chakerLocalRegularityNonlocal2021,chakerRegularityNonlocalProblems2021}. 

\subsection{Definition}

\begin{definition}
    Let $\Om$ be an open bounded subset of $\RR^N$. Let the function $H$ denote 
    \begin{align}
        H(x,y,u(x,t)-u(y,t))=\frac{|u(x,t)-u(y,t)|^p}{|x-y|^{ps}}+a(x,y)\frac{|u(x,t)-u(y,t)|^q}{|x-y|^{qs'}},
    \end{align} where $s,s'\in (0,1), p,q\in [1,\infty)$, and the measurable bounded function $(x,y)\mapsto a(x,y)$ satisfies $0\leq a\leq M<\infty$.
    Let the time-independent Cauchy-Dirichlet data $u_0$ satisfy
    \begin{align}\label{datahypo}
        u_0\in W^{s,p}(\mathbb{R}^N), u_{|\Omega}\in L^2(\Omega)\mbox{ and }\iint\limits_{\mathbb{R}^N\times\mathbb{R}^N}\frac{H(x,y,u_0(x)-u_0(y))}{|x-y|^N}\,dx\,dy<\infty.
    \end{align}

     By a variational solution to~\cref{maineq} we mean a function $u$ such that \[u\in L^p(0,T;W^{s,p}(\mathbb{R}^N))\cap C^0(0,T;L^2(\Om))\mbox{ and }u-u_0\in L^p(0,T;W^{s,p}_0(\Om))\mbox { and }\]
     \begin{align}
         \label{defvar}
         \int_{0}^{T} \int_{\Om}{\de_t v}\,{(v-u)}\,dx\,dt & + \int_{0}^{T}\iint\limits_{\RR^N\times\RR^N} \frac{H(x,y,v(x,t)-v(y,t))-H(x,y,u(x,t)-u(y,t))}{|x-y|^N}\,dx\,dy\,dt\nonumber\\
         &\geq \frac{1}{2}\norm{(v-u)(\cdot,\tau)}^2_{L^2(\Om)}-\frac{1}{2}\norm{(v-u)(\cdot,0)}^2_{L^2(\Om)},
     \end{align} for all $v\in L^p(0,T;W^{s,p}(\mathbb{R}^N))$ and $\de_t v\in L^2(0,T;\Om)$ such that $v-u_0\in L^p(0,T;W_0^{s,p}(\Om))$.
\end{definition}

Moreover, in~\cite{prasadExistenceVariationalSolutions2021a}, we have shown that variational solutions are parabolic minimizers, which we define below. We shall use the more convenient definition of parabolic minimizers in order to prove local boundedness.

\begin{definition}\label{defparmin}
    A measurable function $u:\RR^N\times(0,\infty)\to\RR$ is called a parabolic minimizer for the equation \cref{maineq} if $u\in L^p(0,T;W^{s,p}(\RR^N))$ and $u-u_0\in L^p(0,T;W^{s,p}_0(\Omega))$, and the following minimality condition holds:
    \begin{align*}
        \int_0^T \int_{\Om} u \de_t\phi\,dx\,dt &+ \int_0^T\iint\limits_{\mathbb{R}^N\times\mathbb{R}^N} \frac{H(x,y,u(x,t)-u(y,t))}{|x-y|^N}\,dx\,dy\,dt\\
        &\qquad\leq \int_0^T\iint\limits_{\mathbb{R}^N\times\mathbb{R}^N} \frac{H(x,y,(u+\phi)(x,t)-(u+\phi)(y,t))}{|x-y|^N}\,dx\,dy\,dt,
    \end{align*} whenever $\phi\in C_0^\infty(\Omega\times(0,T))$
    \end{definition}

\begin{remark}
In our definition of variational solution, both the solution and the comparison map have to match on $\Omega^c\times(0,T)$ and since we have assumed that the data on $\Omega^c$ is in $W^{s,p}(\bb{R}^N)$ we may cancel integrals of $H$ over $\Omega^c\times\Omega^c$ on both sides to obtain the following equivalent form of the variational inequality: 
\[
\begin{split}
    \int_0^T \iint\limits_{C_{\Omega}}\frac{H(x,y,u(x,t)-u(y,t))}{|x-y|^N} \,dx \,dy \,dt \leq \int_0^T \iint\limits_{C_{\Omega}}\frac{H(x,y,v(x,t)-v(y,t))}{{|x-y|^N}} \,dx \,dy \,dt \\ + \int_0^T \int_{\Omega}\de_tv\cdot(v-u) + \frac{1}{2}\norm{v(\cdot,0) - u_0(\cdot)}_{L^2(\Omega)}^2 - \frac{1}{2}\norm{v(\cdot,T) - u(\cdot,T)}_{L^2(\Omega)}^2
\end{split}
\]
where $C_{\Omega} = (\Omega^c\times\Omega^c)^c$. The same goes for parabolic minimizers.
\end{remark}

\subsection{Main Results}

We prove the following local boundedness result in the super-critical case \cref{restriction1}. 

\begin{theorem}{\textbf{(Local boundedness in the supercritical case)}}
    \label{mainthm1}

    Let $u\in L^p(0,T;W^{s,p}(\RR^N))\cap C^0(0,T;L^2(\Om))$
    be a variational solution to~\cref{maineq} in accordance with the definition~\cref{defvar} with the initial-boundary data $u_0$ satisfying~\cref{datahypo}. Further assume that
    \[u\in L^\infty(0,T;L^{p-1}_{sp}(\RR^N))\cap L^\infty(0,T;L^{q-1}_{qs'}(\RR^N)).\] The variational solution $u$ is locally bounded in $\Om_T$ provided the restrictions~\cref{restriction} and \cref{restriction1} holds. In the case $q<p_*$, we obtain the following estimate for any $\delta\in (q,p_*)$ and $\sigma\in [1/2,1)$,
    \begin{align}\label{boundest}
        \underset{Q_{\sigma R,\sigma R^{sp}}}{\text{ess sup}} \, u  \,\leq\, &\text{Tail}_{p,s,\infty}(u_+;x_0,R/2,t_0-R^{sp},t_0) + \text{Tail}_{q,s',\infty}(u_+;x_0,R/2,t_0-R^{sp},t_0)\nonumber\\
        & + C 2^{\frac{bN^2\kappa^2}{s\delta\tau(N\kappa+s\delta)}} \tilde{\mathcal{B}}^{\frac{N\kappa}{\tau(N\kappa+s\delta)}}\mathcal{A}^{\frac{1}{\tau}} \left(\fint\limits_{t_0-R^{sp}}^{t_0}\fint\limits_{B_R(x_0)} u_+^\delta \right)^{\frac{s\delta}{\tau(N\kappa+s\delta)}}\wedge 1 ,
    \end{align} where $\kappa=1+\dfrac{2s}{N}$, $b=\kappa\left(N+\max\{ps,qs'\}+\delta\right)$, $\tau=\min\{\delta-q,\delta-p,\delta-2\}$ and $\mathcal{A}=2+R^{sp-qs'}$, $\tilde{\mathcal{B}}= \left[\frac{1}{\sigma^{ps}(1-\sigma)^{N+ps}}+\frac{1}{\sigma^{qs'}(1-\sigma)^{N+qs'}}+\frac{1}{(1-\sigma)^{q}}\right]^{\left(1+\frac{sp}{N}\right)\left(\frac{\delta}{p\kappa}\right)}$, and $C$ is a universal constant.
\end{theorem}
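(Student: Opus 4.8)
The plan is to establish the bound by a De~Giorgi iteration over a nested family of shrinking parabolic cylinders, the decisive ingredient being an energy (Caccioppoli) inequality for parabolic minimizers of the double-phase functional. Since variational solutions are parabolic minimizers in the sense of \cref{defparmin}, as recalled above, I would work exclusively with that minimality inequality. Fixing a point $(x_0,t_0)$, a level $k>0$, two radii, a Lipschitz time cutoff $\zeta$, and a smooth spatial cutoff $\eta$ supported in the smaller ball $B$ with $\eta\equiv1$ on the inner one, I would insert into the minimality condition a competitor built from $u-\zeta^{2}\eta^{q}(u-k)_+$ (after the routine regularization in time, using the property $\de_t u\in L^2(\Om_T)$ recalled in the introduction, valid here since the data is time-independent, to treat the term $\int u\,\de_t\phi$). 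This should produce, with $w=(u-k)_+$, an upper bound for
\begin{align*}
\underset{t}{\text{ess sup}}\int_{B}w^2\eta^{q}\,dx+\iint\limits_{\RR^N\times\RR^N}\left(\frac{|w(x)\eta(x)-w(y)\eta(y)|^p}{|x-y|^{N+ps}}+a(x,y)\frac{|w(x)\eta(x)-w(y)\eta(y)|^q}{|x-y|^{N+qs'}}\right)dx\,dy
\end{align*}
in terms of negative powers of the gap between the two radii times the mass of $w$ on the super-level set $\{u>k\}$, plus the two nonlocal contributions $\text{Tail}_{p,s,\infty}$ and $\text{Tail}_{q,s',\infty}$, which are finite precisely because of the standing hypothesis $u\in L^\infty(0,T;L^{p-1}_{sp}(\RR^N))\cap L^\infty(0,T;L^{q-1}_{qs'}(\RR^N))$. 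The factor $\mathcal{A}=2+R^{sp-qs'}$ and the appearance of $\max\{ps,qs'\}$ in the exponents of \cref{boundest} are the trace of comparing the two homogeneities $sp$ and $qs'$ when rescaling to a unit cylinder.

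Next I would feed this energy bound into the fractional parabolic Sobolev inequality: interpolating the Gagliardo $W^{s,p}$-seminorm in space against the $L^\infty_tL^2_x$ control raises $w\eta$ to $L^{p\kappa}$ over the cylinder with $\kappa=1+\tfrac{2s}{N}$, a genuine gain of integrability because the supercriticality condition \cref{restriction1} says exactly that $p_*=p\kappa>2$. Under \cref{restriction} one may then pick $\delta\in(q,p_*)$ with also $\delta>2$, so that $\tau=\min\{\delta-q,\delta-p,\delta-2\}>0$; in the subcritical regime $p\kappa\le2$ this fails and the extra integrability hypothesis is what takes its place. Writing $A_n=|Q_n|^{-1}\int_{Q_n}(u-k_n)_+^{\delta}$ for levels $k_n\nearrow k$ and radii $R_n\searrow\sigma R$, a Chebyshev estimate of $|\{u>k_{n+1}\}\cap Q_n|$, Hölder's inequality (using $\delta<p\kappa$ together with $\delta>\max\{p,q,2\}$), and the energy--Sobolev chain produce a recursion of De~Giorgi type of the form $A_{n+1}\le C\,\tilde{\mathcal{B}}^{\,c_1}\mathcal{A}^{\,c_2}\,2^{b\,n}\,A_n^{1+\beta}$, where $b=\kappa(N+\max\{ps,qs'\}+\delta)$, the exponent $\beta>0$ is positive precisely because $\tau>0$, and $c_1,c_2$ are explicit positive constants.

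Finally I would invoke the classical fast-geometric-convergence lemma: if $A_0$ lies below a threshold of the form $\big(C\tilde{\mathcal{B}}^{\,c_1}\mathcal{A}^{\,c_2}\big)^{-1/\beta}2^{-b/\beta^2}$, then $A_n\to0$, and hence $u\le k$ a.e.\ on $Q_{\sigma R,\sigma R^{sp}}$. Making explicit the dependence of the recursion constants on $k$ (it enters through the differences $(k_{n+1}-k_n)^{-\delta}$), this smallness requirement becomes a lower bound for $k$ in terms of $|Q_0|^{-1}\int_{Q_0}u_+^{\delta}$ and the geometric data; solving it for the least admissible $k$ and restoring the two tail terms yields exactly \cref{boundest}, the displayed exponents $\tfrac{s\delta}{\tau(N\kappa+s\delta)}$, $\tfrac{N\kappa}{\tau(N\kappa+s\delta)}$, $\tfrac1\tau$ and the constant $\tfrac{bN^2\kappa^2}{s\delta\tau(N\kappa+s\delta)}$ in the exponent of $2$ being the bookkeeping of $\beta$, $\kappa$, $\tau$ and $b$, while the $\wedge 1$ records that the iteration only needs to be run when its right-hand side is small, the conclusion being trivial otherwise. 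I expect the main obstacle to be the Caccioppoli inequality itself: forcing the nonlocal double-phase terms to absorb correctly requires splitting each Gagliardo integral over the regions $\{u(x),u(y)>k\}$, $\{u(x)>k\ge u(y)\}$ and its mirror image, and $\{u(x),u(y)\le k\}$, handled simultaneously for both exponent pairs $(p,s)$ and $(q,s')$, while keeping the dependence on $\eta$ sharp enough to reproduce the precise powers of $\sigma$ and $1-\sigma$ that make up $\tilde{\mathcal{B}}$.
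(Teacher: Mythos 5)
Your outline for the range $q<p_*$ is essentially the paper's proof: a Caccioppoli estimate obtained by testing the parabolic minimality with a truncation multiplied by space--time cutoffs (the paper uses $\xi^{p+q}(x)\psi^2(t)\chi_\epsilon(t)$, you propose $\eta^q\zeta^2$, which works just as well), with the nonlocal contributions controlled by the two tail terms; then the parabolic fractional Sobolev inequality with $\kappa=1+\tfrac{2s}{N}$, a De Giorgi recursion for the $\delta$-averages over shrinking cylinders with levels $k_n\nearrow \tk$, the fast geometric convergence lemma, and finally the choice of $\tk$ at least the two tails plus the quantity appearing in \cref{boundest}. The bookkeeping you describe (the roles of $b$, $\tau$, $\tilde{\mathcal{B}}$, $\mathcal{A}$, and the exponent $\tfrac{s\delta}{\tau(N\kappa+s\delta)}$) matches the paper's Lemmas in Sections 3--4.

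There is, however, one genuine gap: \cref{mainthm1} asserts local boundedness in the whole range \cref{restriction}, i.e.\ up to and including $q=p_*$, while your argument requires picking $\delta\in(q,p_*)$ with $\delta>\max\{p,q,2\}$, an interval that is empty when $q=p_*$; so your proof only yields the case $q<p_*$. The paper closes this case by a separate modification (its Section 5): redefine the levels as $k_j=\tk\bigl(1-2^{-(j+1)}\bigr)$ and iterate the quantity $Y_j=R^{-sp}\int_{I_j}\fint_{B_j}w_j^{q}$, so that the recursion constants no longer depend on $\tk$ and the required smallness of $Y_0=\fint\fint\bigl(u-\tfrac{\tk}{2}\bigr)_+^{q}$ is achieved simply by taking $\tk$ large (using local $q$-integrability of $u_+$); this gives qualitative boundedness without an explicit estimate, exactly as the theorem states. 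A further small point: in this paper $a\wedge b$ is defined as $\max(a,b)$, so the ``$\wedge\,1$'' in \cref{boundest} is not the trivial-case truncation you describe but the requirement $\tk>1$, which is what allows the three terms of $\mathcal{A}_k$ with the different exponents $\delta-p$, $\delta-q$, $\delta-2$ to be absorbed into the single factor $\mathcal{A}\,\tk^{-\tau}$ with $\tau=\min\{\delta-q,\delta-p,\delta-2\}$.
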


We also prove the following local boundedness result in the sub-critical case \cref{restriction2} under an assumption of higher integrability on the solution. 

\begin{theorem}{\textbf{(Local boundedness in the subcritical case)}}
    \label{mainthm2}

    Let $u\in L^p(0,T;W^{s,p}(\RR^N))\cap C^0(0,T;L^2(\Om))$
    be a variational solution to~\cref{maineq} in accordance with the definition~\cref{defvar} with the initial-boundary data $u_0$ satisfying~\cref{datahypo}. Assume that one of the two following two hypotheses is true.
    \begin{description}
        \descitem{H1}{H1} $u\in L^r_{\text{loc}}(\Om_T)$ for some 
        \begin{align}\label{restiction3} 
        r>\max\left\{2,\frac{N(2-q)}{sq}+\frac{4}{q},\frac{N(2-p)}{sp}\right\}.
        \end{align}
        \descitem{H2}{H2} There is a sequence $(u_k)_{k\in\NN}$ of bounded variational solutions of \cref{maineq} such that $u_k\to u$ in $L^r_{\text{loc}}(\Om_T)$ for some $r>\max\left\{2,\frac{N(2-p)}{sp}\right\}$ and $u_k$ is uniformly bounded in $L^\infty(0,T;L^{p-1}_{sp}(\RR^N))\cap L^\infty(0,T;L^{q-1}_{qs'}(\RR^N))$. This hypothesis holds, for example, if we know {\it a priori} that $u$ is qualitatively bounded.
    \end{description}
    Further assume that
    \[u\in L^\infty(0,T;L^{p-1}_{sp}(\RR^N))\cap L^\infty(0,T;L^{q-1}_{qs'}(\RR^N)).\] The variational solution $u$ is locally bounded in $\Om_T$ in the ranges \cref{restriction} and \cref{restriction2}. In the case $q<p_*$, we obtain the following estimate
    \begin{align}\label{boundest2}
            \underset{Q_{R/2,(R/2)^{sp}}}{\text{ess sup}} \, u  \,\leq\, &\text{Tail}_{p,s,\infty}(u_+;x_0,R/2,t_0-R^{sp},t_0) + \text{Tail}_{q,s',\infty}(u_+;x_0,R/2,t_0-R^{sp},t_0)\nonumber\\
            &\qquad\qquad+C\left(\fint\fint\limits_{Q_{R,R^{sp}}}u^r\,dx\,dt\right)^{\frac{sp}{(r-2)(N+sp)-N(r-p_*)}},
    \end{align} where $\kappa=1+\dfrac{2s}{N}$, $b=\kappa\left(N+\max\{ps,qs'\}+\delta\right)$, $\tau=\min\{\delta-q,\delta-p,\delta-2\}$ and $\mathcal{A}=2+R^{sp-qs'}$ and $C$ is a constant depending on the data.
\end{theorem}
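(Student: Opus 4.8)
The plan is to reduce the subcritical case to the supercritical Theorem \ref{mainthm1} by an exponent-shifting argument, exploiting the extra integrability supplied by \descref{H1}{H1} or \descref{H2}{H2}. Recall that in the supercritical range \cref{restriction1} the key structural input is the fractional Sobolev embedding $W^{s,p}\hookrightarrow L^{p\kappa}$ with $\kappa=1+\tfrac{2s}{N}$, which is what makes the Moser/De Giorgi iteration in Theorem \ref{mainthm1} close with a gain $\kappa>1$. When $1<p\le \tfrac{2N}{2s+N}$ this parabolic interpolation fails to produce a genuine gain in the natural energy estimate, so the iteration stalls unless one feeds in a higher $L^r$-norm of $u$ to make up the deficit. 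The condition \cref{restiction3} is precisely the threshold at which the interpolation inequality $\|u\|_{L^{p\kappa}}\lesssim \|u\|_{W^{s,p}}^{\theta}\|u\|_{L^r}^{1-\theta}$ (parabolic version, combining the spatial embedding with the $L^\infty_t L^r_x$ or $L^r_{x,t}$ control) regains an effective exponent $>1$.

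First I would record the \textbf{Caccioppoli inequality} for parabolic minimizers of \cref{maineq} on cylinders $Q_{r,r^{sp}}\subset\Om_T$ applied to the truncated functions $(u-k)_+$; this is the same inequality already established and used in the proof of Theorem \ref{mainthm1}, and it does not use \cref{restriction1}. Its right-hand side contains the nonlocal tail terms $\text{Tail}_{p,s,\infty}(u_+;\cdots)$ and $\text{Tail}_{q,s',\infty}(u_+;\cdots)$, which are finite by the standing assumption $u\in L^\infty(0,T;L^{p-1}_{sp})\cap L^\infty(0,T;L^{q-1}_{qs'})$. Second, I would set up the De Giorgi level sets $k_n = k(1-2^{-n})$ (or $k_n\nearrow k$) on a shrinking sequence of cylinders, and estimate $\iint_{Q_n}(u-k_n)_+^{p}$ from above. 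The new ingredient relative to Theorem \ref{mainthm1} is that, instead of using the pure embedding, one interpolates: write
\begin{align*}
\iint_{Q_n}(u-k_n)_+^{p(1+\varepsilon)} \;\le\; \Big(\sup_t\!\int (u-k_n)_+^2\Big)^{a}\Big(\iint (u-k_n)_+^{p\kappa}\Big)^{b}\,\Big(\iint (u-k_n)_+^{r}\Big)^{c}
\end{align*}
with Hölder exponents $a,b,c$ chosen so that $a+b+c=1$ and the powers match $p(1+\varepsilon)$; feasibility of such a choice with $\varepsilon>0$ is exactly the arithmetic constraint \cref{restiction3}. Under \descref{H2}{H2} one runs this for the bounded approximants $u_k$ (for which all quantities are finite and the iteration is legitimate), obtains the bound \cref{boundest2} uniformly in $k$, and passes to the limit using $u_k\to u$ in $L^r_{\text{loc}}$; under \descref{H1}{H1} one works directly with $u$, the $L^r_{\text{loc}}$ membership guaranteeing all the integrals above are finite so that the fast-geometric-convergence lemma (if $Y_{n+1}\le C\,\beta^n\,Y_n^{1+\nu}$ with $Y_0$ small then $Y_n\to 0$) applies.

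Finally, I would run the \textbf{iteration}: the Caccioppoli inequality plus the interpolation above yields $Y_{n+1}\le C b^n Y_n^{1+\nu}$ for $Y_n = \iint_{Q_n}(u-k_n)_+^{p}$, with $\nu>0$ determined by the slack in \cref{restiction3}; choosing $k$ (the cutoff of the essential supremum) large enough in terms of $\fint\fint_{Q_{R,R^{sp}}}u^r$ and the two tail terms forces $Y_0$ below the smallness threshold, hence $Y_\infty=0$, i.e. $\operatorname{ess\,sup}_{Q_{R/2,(R/2)^{sp}}}u\le k$. Tracking the dependence of the smallness threshold on $b,\nu$ and the geometric constants produces the explicit exponent $\tfrac{sp}{(r-2)(N+sp)-N(r-p_*)}$ in \cref{boundest2}; note this denominator is positive precisely when $r$ exceeds the bound in \cref{restiction3}, which is the sanity check that the estimate is meaningful. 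Applying the argument to $-u$ (a parabolic minimizer for the same equation, the integrand $H$ being even in its last argument) gives the two-sided bound and hence local boundedness.

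The main obstacle is the second step: verifying that the Hölder/interpolation exponents $a,b,c$ (equivalently the parabolic interpolation inequality in the subcritical regime) can be chosen to produce a strict gain $\nu>0$ under hypothesis \cref{restiction3}, and doing so while keeping the constants in a form that survives the iteration and yields the clean closed-form exponent in \cref{boundest2}. A secondary technical point is the justification of the iteration itself in case \descref{H1}{H1} — one must ensure the truncated energies are genuinely finite and the Caccioppoli inequality is available for $u$ directly rather than only for bounded competitors, which is why hypothesis \descref{H2}{H2} (approximation by bounded solutions) is offered as the more robust alternative and why the stated existence theory in \cite{prasadExistenceVariationalSolutions2021a} is invoked.
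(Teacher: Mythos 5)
Your plan reproduces, in essence, only the first step of the paper's argument, and the part you defer to ``tracking the constants'' is where the real work lies. Interpolating the truncation energies between the Sobolev gain $L^{p\kappa}$ and the extra $L^r$ integrability, and then running a single De Giorgi iteration $Y_{n+1}\le Cb^nY_n^{1+\nu}$ with $\nu>0$ coming from the slack in \cref{restiction3}, is exactly what the paper does in Step 1 (with the specific split $1=\tfrac{2\alpha}{q}+\tfrac{2(1-\alpha)}{r}$, which is where the term $\tfrac{N(2-q)}{sq}+\tfrac{4}{q}$ in \cref{restiction3} comes from). But this only yields \emph{qualitative} local boundedness; the paper states explicitly that the bound obtained this way is not optimal in the exponents, and the choice of $k$ forced by the smallness condition in the fast-convergence lemma carries the interpolation parameters ($\alpha$, $\kappa$, the slack $\nu$), not the clean exponent $\tfrac{sp}{(r-2)(N+sp)-N(r-p_*)}$ of \cref{boundest2}. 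To get that exponent the paper performs two further steps you do not have: a second De Giorgi iteration at level $r$, using the now-known local boundedness to write $\tw^r\le \norm{u}_{L^\infty}^{\,r-p_*}\tw^{p_*}$ and estimate the $p_*$-integral by Caccioppoli plus Sobolev, which produces an $\operatorname{ess\,sup}$ bound containing $\norm{u}_{L^\infty(Q_{\rho,\rho^{sp}})}^{\frac{N(r-p_*)}{(r-2)(N+sp)}}$ on the right; and then an absorption argument on a sequence of nested cylinders via the interpolation \cref{interlope_lemma} (valid because $\tfrac{N(r-p_*)}{(r-2)(N+sp)}<1$, i.e. $r>\tfrac{N(2-p)}{sp}$), which moves the $L^\infty$ factor to the left and is precisely what generates the denominator $(r-2)(N+sp)-N(r-p_*)$. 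Without this two-tier structure your scheme cannot produce the stated estimate.

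Two smaller inaccuracies: the positivity of the denominator in \cref{boundest2} is equivalent to $r>\tfrac{N(2-p)}{sp}$ alone (the condition in \descref{H2}{H2} and in the absorption step), not to the full maximum in \cref{restiction3}, whose additional term $\tfrac{N(2-q)}{sq}+\tfrac{4}{q}$ is needed only for the qualitative Step 1 under \descref{H1}{H1}; and under \descref{H2}{H2} the point is that Step 1 is skipped entirely (boundedness of the approximants $u_k$ substitutes for it), with the quantitative Steps 2--3 run uniformly in $k$ before passing to the limit in $L^r_{\mathrm{loc}}$ --- your description of H2 is compatible with this, but the estimate you would pass to the limit is again the one your single iteration does not provide. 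The Caccioppoli input, the finiteness of the tails, and the treatment of the limit case $q=p_*$ as in \cref{sec5} are fine.
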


\subsection{Plan of the paper} The paper is organized as follows. In \cref{sec2}, we state the preliminaries required for later sections, including the notation and embeddings of the functional spaces as well as iteration lemmas. In \cref{sec3}, we prove a Caccioppoli inequality for parabolic minimizers of nonlocal parabolic equations. In \cref{sec4}, we prove certain recursive inequalities and the boundedness estimate in the ranges $q<p_*$ and \cref{restriction1}. \cref{recurselemma} does not require the restriction $p>\frac{2N}{2s+N}$ and therefore, it will also be used in the subcritical case. \cref{sec5} is devoted to the qualitative boundedness in the limit case $q=p_*$ for the supercritical case. We finally prove the boundedness estimate for $q<p_*$ as well as the qualitative boundedness in the limit case $q=p_*$ in \cref{sec6} for the range \cref{restriction2}.

\section{Preliminaries}\label{sec2}
\subsection{Notations}
We begin by collecting the standard notation that will be used throughout the paper:
\begin{itemize}
\item We shall denote $N$ to be the space dimension. We shall denote by $z=(x,t)$ a point in $ \RR^N\times (0,T)$.  
\item We shall alternately use $\dfrac{\partial f}{\partial t},\partial_t f,f'$ to denote the time derivative of f.
 \item Let $\Omega$ be an open bounded domain in $\mathbb{R}^N$ with boundary $\partial \Omega$ and for $0<T\leq\infty$,  let $\Omega_T\coloneqq \Omega\times (0,T)$. 
 \item We shall use the notation
\begin{align*}
	&B_{\rho}(x_0)=\{x\in\RR^N:|x-x_0|<\rho\},\\
	&\overline{B}_{\rho}(x_0)=\{x\in\RR^N:|x-x_0|\leq\rho\},\\
    &I_{\tht}(t_0)=\{t\in\RR:t_0-\tht<t<t_0\},\\
    &Q_{\rho,\tht}(z_0)=B_{\rho}(x_0)\times I_\tht(t_0)
\end{align*} 
\item The maximum of two numbers $a$ and $b$ will be denoted by $a\wedge b\coloneqq \max(a,b)$. 
\item Integration with respect to either space or time only will be denoted by a single integral $\int$ whereas integration on $\Om\times\Om$ or $\RR^N\times\RR^N$ will be denoted by a double integral $\iint$.
\item The notation $a \lesssim b$ is shorthand for $a\leq C b$ where $C$ is a universal constant which only depends on the dimension $N$, exponents $p$, $q$, and the numbers $M$, $s$ and $s'$. 
\item For a function $u$ defined on the cylinder $Q_{\rho,\theta}(z_0)$ and any level $k \in \bb{R}$ we write $w_{\pm} = (u-k)_{\pm}$ 
\item We denote $A_{\pm}(k) = \{w_{\pm} > 0\}$; for any ball $B_r$ and time interval $I$ we write $A_{\pm}(k) \cap (B_{r}\times I) = A_{\pm}(k,r,I)$. 
\end{itemize}

\subsection{Function spaces}
Let $1<p<\infty$, we denote by $p'=p/(p-1)$ the conjugate exponent of $p$. Let $\Om$ be an open subset of $\RR^N$. We define the {\it Sobolev-Slobodeki\u i} space, which is the fractional analogue of Sobolev spaces.
\begin{align*}
    W^{s,p}(\Om)=\left\{ \psi\in L^p(\Omega): [\psi]_{W^{s,p}(\Om)}<\infty \right\}, s\in (0,1),
\end{align*} where the seminorm $[\cdot]_{W^{s,p}(\Om)}$ is defined by 
\begin{align*}
    [\psi]_{W^{s,p}(\Om)}=\left(\iint\limits_{\Om\times\Om} \frac{|\psi(x)-\psi(y)|^p}{|x-y|^{N+ps}}\,dx\,dy\right)^{\frac 1p}.
\end{align*}
The space when endowed with the norm $\norm{\psi}_{W^{s,p}(\Om)}=\norm{\psi}_{L^p(\Om)}+[\psi]_{W^{s,p}(\Om)}$ becomes a Banach space. The space $W^{s,p}_0(\Om)$ is the subspace of $W^{s,p}(\RR^N)$ consisting of functions that vanish outside $\Om$. We will use the notation $W^{s,p}_{(u_0)}(\Om)$ to denote the space of functions in $W^{s,p}(\RR^N)$ such that $u-u_0\in W^{s,p}_0(\Om)$.

Let $I$ be an interval and let $V$ be a separable, reflexive Banach space, endowed with a norm $\norm{\cdot}_V$. We denote by $V^*$ its topological dual space. Let $v$ be a mapping such that for a.e. $t\in I$, $v(t)\in V$. If the function $t\mapsto \norm{v(t)}_V$ is measurable on $I$, then $v$ is said to belong to the Banach space $L^p(I;V)$ if $\int_I\norm{v(t)}_V^p\,dt<\infty$. It is well known that the dual space $L^p(I;V)^*$ can be characterized as $L^{p'}(I;V^*)$.

Since the boundedness result requires some finiteness condition on the nonlocal tails, we define the tail space as below
\begin{align}\label{tailspace}
    L^m_{\alpha}(\RR^N):=\left\{ v\in L^m_{\text{loc}}(\RR^N):\int\limits_{\RR^N}\frac{|v(x)|^m}{1+|x|^{N+\alpha}}\,dx<+\infty \right\},\,m>0,\,\alpha>0.
\end{align}

Then a nonlocal tail is defined by 
\begin{align}
    \label{NonlocalTail}
    \text{Tail}_{m,s,\infty}(v;x_0,R,I)=\text{Tail}_\infty(v;x_0,R,t_0-\tht,t_0)=\sup_{t\in (t_0-\tht, t_0)}\left( R^{sm}\int\limits_{\RR^N\setminus B_R(x_0)} \frac{|v(x,t)|^{m-1}}{|x-x_0|^{N+sm}}\,dx \right)^{\frac{1}{m-1}},
\end{align} where $(x_0,t_0)\in \RR^N\times (0,T)$ and the interval $I=(t_0-\tht,t_0)\subseteq (0,T)$. From this definition, it follows that $\text{Tail}_{m,s,\infty}(v;x_0,R,I)<\infty$ for $v\in L^\infty(0,T;L^{m-1}_{sm}(\RR^N))$. 

\subsection{Auxiliary Results}
We collect the following standard results which will be used in the course of the paper. We begin with the Sobolev-type inequality~\cite[Lemma 2.3]{dingLocalBoundednessHolder2021}.

\begin{theorem}[\cite{dingLocalBoundednessHolder2021}]
    Let $t_2>t_1>0$. Suppose $s\in(0,1)$ and $1\leq p<\infty$. Then for any $f\in L^p(t_1,t_2;W^{s,p}(B_r))\cap L^\infty(t_1,t_2;L^2(B_r))$, we have
    \begin{align}\label{sobolev}
        \int\limits_{t_1}^{t_2}&\fint\limits_{B_r}|f(x,t)|^{p\left(1+\frac{2s}{N}\right)}\,dx\,dt\nonumber\\
        &\leq C\left(r^{sp}\int\limits_{t_1}^{t_2}\int\limits_{B_r}\fint\limits_{B_r}\frac{|f(x,t)-f(y,t)|^p}{|x-y|^{N+sp}}\,dx\,dy\,dt+\int\limits_{t_1}^{t_2}\fint\limits_{B_r}|f(x,t)|^p\,dx\,dt\right)\nonumber\\
        &\times\left(\sup_{t_1<t<t_2}\fint_{B_r}|f(x,t)|^2\,dx\right)^{\frac{sp}{N}},
    \end{align} where $C>0$ depends on $N$, $s$ and $p$. 
\end{theorem}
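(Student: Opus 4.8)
This estimate is quoted verbatim from~\cite{dingLocalBoundednessHolder2021}, so strictly speaking the citation suffices; the argument one would carry out is the following parabolic upgrade of the stationary fractional Sobolev inequality. The plan is first to reduce to the case $r=1$ by the dilation $x\mapsto x_0+rx$: every term in~\cref{sobolev} carries exactly one spatial average $\fint_{B_r}$, so the Jacobians cancel and the estimate is scale invariant, the explicit factor $r^{sp}$ in front of the Gagliardo seminorm being precisely what this bookkeeping produces. Thus it is enough to prove the inequality on $B_1$.

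Working on $B_1$ and freezing $t\in(t_1,t_2)$, I would invoke the fractional Sobolev inequality $W^{s,p}(B_1)\hookrightarrow L^{p^*}(B_1)$ with $p^*=Np/(N-sp)$, valid when $sp<N$, in the form
\[
\|f(\cdot,t)\|_{L^{p^*}(B_1)}^{p}\le C\big([f(\cdot,t)]_{W^{s,p}(B_1)}^{p}+\|f(\cdot,t)\|_{L^{p}(B_1)}^{p}\big),
\]
and then interpolate the target exponent $p\kappa=p(1+\tfrac{2s}{N})$ between $L^2(B_1)$ and $L^{p^*}(B_1)$. The algebraic fact that makes this work is the Sobolev relation $\tfrac1p=\tfrac sN+\tfrac1{p^*}$, which places $p\kappa$ between $2$ and $p^*$; choosing the interpolation parameter so that $L^{p^*}$ enters to the power $p$ then forces the companion power of $\|f\|_{L^2}$ to equal $p(\kappa-1)=\tfrac{2sp}{N}$, which is exactly the exponent in~\cref{sobolev}. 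Combining the two inequalities, integrating over $t\in(t_1,t_2)$, and pulling the factor $\big(\sup_{t}\|f(\cdot,t)\|_{L^2(B_1)}^2\big)^{sp/N}$ out of the time integral yields~\cref{sobolev} on $B_1$ up to dimensional constants; undoing the dilation finishes the proof.

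The step I expect to be the main obstacle is the critical and supercritical Sobolev range $sp\ge N$, in which $p^*$ is no longer finite and the clean interpolation above is unavailable: there one would instead embed $W^{s,p}(B_1)$ into a finite but sufficiently large $L^{q}(B_1)$ and repeat the interpolation, absorbing the extra volume factors (which are harmless once $r=1$). Away from that edge case the proof is just a careful matching of exponents, the value $\kappa=1+\tfrac{2s}{N}$ being dictated entirely by the requirement that the supremum-in-time term appear to the power $sp/N$.
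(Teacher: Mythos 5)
The paper offers no proof of this statement at all --- it is imported verbatim as \cite[Lemma 2.3]{dingLocalBoundednessHolder2021} --- so the only thing to compare against is the standard argument, and your sketch reproduces it correctly in the main range $sp<N$. Your scaling bookkeeping is right (each term carries exactly one spatial average, and the factor $r^{sp}$ is exactly what makes the Gagliardo term dilation-invariant), and the exponent matching is right: taking $\theta=1/\kappa$ in $\tfrac{1}{p\kappa}=\tfrac{\theta}{p^*}+\tfrac{1-\theta}{2}$ is equivalent to $\tfrac{1}{p^*}=\tfrac1p-\tfrac sN$, puts the bracket $[f]_{W^{s,p}}^p+\|f\|_{L^p}^p$ to the first power, and leaves $\big(\sup_t\fint_{B_1}|f|^2\big)^{sp/N}$ after integrating in time, which is \cref{sobolev} on $B_1$.

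The genuine gap is exactly the edge case you flag, and the patch you propose does not work as described. If $sp\ge N$ and you embed $W^{s,p}(B_1)$ into a finite $L^{q}(B_1)$, then demanding that the bracket enter with power exactly one (so that the time integration produces $\int_{t_1}^{t_2}\big([f]_{W^{s,p}}^p+\|f\|_{L^p}^p\big)\,dt$ and nothing else) forces, by the same exponent count, $\tfrac1q=\tfrac1p-\tfrac sN\le 0$, which no finite $q$ satisfies; any other interpolation parameter gives the bracket to a power $\theta\kappa<1$ and the $L^2$ term to a power strictly larger than $sp/N$, plus an extra factor $(t_2-t_1)^{1-\theta\kappa}$ after H\"older in time --- a different inequality, not \cref{sobolev}. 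Since the statement is asserted for all $1\le p<\infty$, $s\in(0,1)$, and the supercritical regime of \cref{mainthm1} genuinely allows $sp>N$, this range needs a different idea rather than ``the same bookkeeping with a larger $q$'': for instance, for $sp>N$ one can combine the elementary bound $\fint_{B_1}|f|^{p\kappa}\le\|f\|_{L^\infty(B_1)}^{p\kappa-2}\fint_{B_1}|f|^2$ with a multiplicative (Gagliardo--Nirenberg/Morrey-type) estimate $\|f\|_{L^\infty(B_1)}\lesssim\big([f]_{W^{s,p}(B_1)}+\|f\|_{L^p(B_1)}\big)^{\alpha}\|f\|_{L^2(B_1)}^{1-\alpha}$ with the scaling-dictated $\alpha=\tfrac{N}{N+2s-2N/p}$; one checks $\alpha(p\kappa-2)=p$, so exactly the stated exponents reappear (the borderline $sp=N$ requires a similar separate treatment). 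Alternatively, and as the paper implicitly does, one simply lets the citation to \cite{dingLocalBoundednessHolder2021} carry these cases. Note the additive Morrey bound $\|f\|_{L^\infty}\lesssim[f]_{W^{s,p}}+\|f\|_{L^2}$ is not enough here; the multiplicative form is what restores the power $sp/N$ on the sup term.
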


We shall make use of the following well-known iteration lemma whose proof is similar to \cite[Lemma 6.1]{giustiDirectMethodsCalculus2003}.

\begin{lemma}\label{iterlemma}
	Let $Z(t)$ be a bounded non-negative function in the interval $\rho,R$. Assume that for $\rho\leq t<s\leq R$ we have
	\begin{align*}
		Z(t)\leq [\alpha,\vartheta)[A(s-t)^{-\alpha}+B(s-t)^{-\beta}+C(s-t)^{-\gamma}+D(s-t)^{-\delta}+E]+\vartheta Z(s)
	\end{align*} with $A,B,C,D,E\geq 0$, $\alpha,\beta,\gamma,\delta>0$ and $0\leq \vartheta<1$. Then,
\begin{align*}
	Z(\rho)\leq c(\alpha,\vartheta)[A(R-\rho)^{-\alpha}+B(R-\rho)^{-\beta}+C(R-\rho)^{-\gamma}+D(R-\rho)^{-\delta}+E].
\end{align*}
\end{lemma}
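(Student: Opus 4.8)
The plan is to run the classical hole-filling iteration on a geometric sequence of radii, exactly as in \cite[Lemma 6.1]{giustiDirectMethodsCalculus2003}, adapted so that the four negative powers $\alpha,\beta,\gamma,\delta$ are handled simultaneously; here the hypothesis is read in its natural form, on the interval $[\rho,R]$. First I would fix a parameter $\tau\in(0,1)$, to be chosen at the end, and set $t_0=\rho$ and $t_{i+1}=t_i+(1-\tau)\tau^i(R-\rho)$, so that $t_i=\rho+(1-\tau^i)(R-\rho)\uparrow R$ and $t_{i+1}-t_i=(1-\tau)\tau^i(R-\rho)$. Applying the assumed inequality with $t=t_i<s=t_{i+1}$ and iterating from $i=0$ to $i=k-1$ yields
\[
Z(\rho)\le \vartheta^k Z(t_k)+\sum_{i=0}^{k-1}\vartheta^i\left[A(t_{i+1}-t_i)^{-\alpha}+B(t_{i+1}-t_i)^{-\beta}+C(t_{i+1}-t_i)^{-\gamma}+D(t_{i+1}-t_i)^{-\delta}+E\right].
\]

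Next I would substitute $(t_{i+1}-t_i)^{-\mu}=(1-\tau)^{-\mu}\tau^{-i\mu}(R-\rho)^{-\mu}$ for each exponent $\mu\in\{\alpha,\beta,\gamma,\delta\}$, which turns the $A$, $B$, $C$, $D$ contributions into geometric series in $i$ with ratios $\vartheta\tau^{-\mu}$ and the $E$ contribution into $E\sum_i\vartheta^i$. The decisive step is the choice of $\tau$: since $0\le\vartheta<1$, the interval $(\vartheta^{1/\max\{\alpha,\beta,\gamma,\delta\}},1)$ is nonempty, and any $\tau$ in it forces $\vartheta\tau^{-\mu}<1$ for every $\mu\in\{\alpha,\beta,\gamma,\delta\}$ at once, because $\mu\mapsto\tau^{-\mu}$ is increasing for $\tau\in(0,1)$, so it suffices to dominate the largest exponent. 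Then all four series converge, with $\sum_{i\ge0}(\vartheta\tau^{-\mu})^i\le(1-\vartheta\tau^{-\max\{\alpha,\beta,\gamma,\delta\}})^{-1}$ and $\sum_{i\ge0}\vartheta^i=(1-\vartheta)^{-1}$, while $\vartheta^kZ(t_k)\to0$ as $k\to\infty$ because $Z$ is bounded on $[\rho,R]$. Letting $k\to\infty$ then gives $Z(\rho)\le c(\alpha,\vartheta)[A(R-\rho)^{-\alpha}+B(R-\rho)^{-\beta}+C(R-\rho)^{-\gamma}+D(R-\rho)^{-\delta}+E]$, with $c$ collecting the factors $(1-\tau)^{-\max\{\alpha,\beta,\gamma,\delta\}}$, $(1-\vartheta\tau^{-\max\{\alpha,\beta,\gamma,\delta\}})^{-1}$ and $(1-\vartheta)^{-1}$.

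I do not expect any genuine obstacle: this is a routine iteration argument. The only point that needs a moment of care is the simultaneous choice of $\tau$ described above — that one ratio parameter can control all four powers — and the one piece of bookkeeping worth flagging is that the constant written $c(\alpha,\vartheta)$ in the statement in fact depends on $\alpha,\beta,\gamma,\delta$ and $\vartheta$, which is consistent with the cited reference and harmless for the later applications.
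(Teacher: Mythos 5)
Your proposal is correct and is exactly the classical geometric-sequence ("hole-filling") iteration of Giusti's Lemma 6.1, which is precisely the proof the paper invokes by reference (it states the lemma without proof, citing Giusti). The simultaneous choice $\tau\in(\vartheta^{1/\max\{\alpha,\beta,\gamma,\delta\}},1)$ handles all four powers at once, and your remark that the constant really depends on $\alpha,\beta,\gamma,\delta,\vartheta$ (the statement's "$[\alpha,\vartheta)$" and "$c(\alpha,\vartheta)$" are just typographical shorthand) is accurate and harmless.
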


We recall the following well known lemma concerning the geometric convergence of sequence of numbers (see \cite[Lemma 4.1 from Section I]{dibenedettoDegenerateParabolicEquations1993} for the details): 
\begin{lemma}\label{geo_con}
	Let $\{Y_n\}$, $n=0,1,2,\ldots$, be a sequence of positive number, satisfying the recursive inequalities 
	\[ Y_{n+1} \leq C b^n Y_{n}^{1+\alpha},\]
	where $C > 1$, $b>1$, and $\alpha > 0$ are given numbers. If 
	\[ Y_0 \leq  C^{-\frac{1}{\alpha}}b^{-\frac{1}{\alpha^2}},\]
	then $\{Y_n\}$ converges to zero as $n\to \infty$. 
\end{lemma}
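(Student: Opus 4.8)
The plan is to establish, by induction on $n$, the quantitative decay estimate
\[
Y_n \le Y_0\, b^{-n/\alpha} \qquad \text{for all } n \ge 0,
\]
from which the conclusion is immediate: since $b>1$ and $\alpha>0$ we have $b^{-n/\alpha}\to 0$, hence $Y_n\to 0$. Thus the entire content of the lemma reduces to verifying this inductive claim, and the smallness hypothesis on $Y_0$ is precisely the amount of slack needed to make the induction self-propagating.

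Concretely, I would set $r:=b^{1/\alpha}>1$, so that the claim reads $Y_n\le Y_0\, r^{-n}$. The base case $n=0$ is an equality. For the inductive step, assume $Y_n\le Y_0\, r^{-n}$; substituting into the recursion $Y_{n+1}\le C b^{n} Y_n^{1+\alpha}$ and using the identity $r^{\alpha}=b$,
\[
Y_{n+1}\le C b^{n} Y_0^{1+\alpha}\, r^{-n(1+\alpha)} = \big(C\,Y_0^{\alpha}\, b^{n}\, r^{-n\alpha}\big)\, Y_0\, r^{-n} = \big(C\,Y_0^{\alpha}\big)\, Y_0\, r^{-n}.
\]
It therefore suffices that $C Y_0^{\alpha}\le r^{-1}=b^{-1/\alpha}$, i.e.\ $Y_0\le C^{-1/\alpha} b^{-1/\alpha^2}$, which is exactly the standing hypothesis. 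Consequently $Y_{n+1}\le Y_0\, r^{-n}\cdot r^{-1}=Y_0\, r^{-(n+1)}$, closing the induction.

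I do not expect a genuine obstacle here: this is the classical fast geometric convergence lemma, and the only delicate point is the exponent bookkeeping that makes the threshold on $Y_0$ line up precisely with the induction. If one prefers to avoid guessing the decay rate $b^{1/\alpha}$ in advance, an equivalent route is to pass to logarithms: writing $L_n=\log Y_n$ (legitimate since $Y_n>0$), the recursion becomes $L_{n+1}\le \log C + n\log b + (1+\alpha)L_n$, and a direct check shows that the affine sequence $\bar L_n := L_0 - (n/\alpha)\log b$ is a supersolution of this inequality exactly when $L_0\le -\tfrac1\alpha\log C - \tfrac1{\alpha^2}\log b$; comparison then gives $L_n\le \bar L_n\to-\infty$, i.e.\ $Y_n\to 0$, recovering the same conclusion and the same threshold.
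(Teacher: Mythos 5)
Your proof is correct: the induction $Y_n\le Y_0\,b^{-n/\alpha}$ closes exactly under the stated threshold on $Y_0$, and the exponent bookkeeping checks out. The paper does not prove this lemma but cites DiBenedetto's book for it, and the argument given there is essentially this same induction, so your proposal matches the standard (referenced) proof.
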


Finally, we will make use of the following version of an interpolation lemma from \cite[p. 13, Lemma 4.3]{dibenedettoDegenerateParabolicEquations1993}. 

\begin{lemma}
    \label{interlope_lemma}
    Let $\{Y_n\}$, $n=0,1,2,\ldots$, be a sequence of equibounded positive numbers, satisfying the recursive inequalities \[ Y_{n} \leq C b^n Y_{n+1}^{1-\alpha}+\mathcal{K},\]
	where $C > 1$, $b>1, \mathcal{K}>0$, and $\alpha\in (0,1)$ are given numbers. Then \[ Y_0 \leq \left(\frac{2C}{b^{1-\frac{1}{\alpha}}}\right)^{\frac{1}{\alpha}}+2\mathcal{K}. \]
\end{lemma}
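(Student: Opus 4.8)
The plan is to first use Young's inequality to trade the term $Cb^nY_{n+1}^{1-\alpha}$ for a genuinely \emph{sublinear} absorption term $\nu Y_{n+1}$ with $\nu<1$ plus an explicit remainder, then iterate the resulting inequality and sum a geometric series, invoking equiboundedness only to discard the tail. Concretely, I would fix once and for all
\[
\nu:=\tfrac12\,b^{-1/\alpha}\in\bigl(0,\tfrac12\bigr),\qquad \mu:=\frac{\nu}{1-\alpha},
\]
(the first membership uses $b>1$), and apply Young's inequality with conjugate exponents $\tfrac1{1-\alpha}$ and $\tfrac1\alpha$ to the factorisation $Cb^nY_{n+1}^{1-\alpha}=\bigl((\mu Y_{n+1})^{1-\alpha}\bigr)\cdot\bigl(Cb^n\mu^{-(1-\alpha)}\bigr)$, obtaining
\[
Cb^nY_{n+1}^{1-\alpha}\le (1-\alpha)\mu\,Y_{n+1}+\alpha\,\mu^{-\frac{1-\alpha}{\alpha}}C^{1/\alpha}b^{n/\alpha}=\nu\,Y_{n+1}+\Lambda\,b^{n/\alpha},
\]
where $\Lambda:=\alpha\,\mu^{-(1-\alpha)/\alpha}C^{1/\alpha}$. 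Hence $Y_n\le \nu Y_{n+1}+\Lambda b^{n/\alpha}+\mathcal K$ for every $n\ge0$.

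Next I would iterate this bound $m$ times, which gives
\[
Y_0\le \nu^m Y_m+\Lambda\sum_{n=0}^{m-1}\bigl(\nu b^{1/\alpha}\bigr)^n+\mathcal K\sum_{n=0}^{m-1}\nu^n .
\]
Now let $m\to\infty$: since $\{Y_n\}$ is equibounded and $0<\nu<1$, the tail $\nu^m Y_m\to0$; and since $\nu b^{1/\alpha}=\tfrac12$ and $\nu<\tfrac12$, both geometric series converge, with $\sum_{n\ge0}(\nu b^{1/\alpha})^n=2$ and $\sum_{n\ge0}\nu^n=(1-\nu)^{-1}<2$. This yields $Y_0\le 2\Lambda+2\mathcal K$.

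It remains to identify the constant. Since $1/\mu=2(1-\alpha)b^{1/\alpha}$ one has $\mu^{-(1-\alpha)/\alpha}=2^{(1-\alpha)/\alpha}(1-\alpha)^{(1-\alpha)/\alpha}b^{(1-\alpha)/\alpha^2}$, so that $2\Lambda=2\alpha\,\mu^{-(1-\alpha)/\alpha}C^{1/\alpha}=2^{1/\alpha}C^{1/\alpha}b^{(1-\alpha)/\alpha^2}\cdot\alpha(1-\alpha)^{(1-\alpha)/\alpha}$. Because $\alpha\in(0,1)$ we have $\alpha(1-\alpha)^{(1-\alpha)/\alpha}<1$, hence
\[
2\Lambda\le (2C)^{1/\alpha}b^{(1-\alpha)/\alpha^2}=\Bigl(\tfrac{2C}{b^{1-1/\alpha}}\Bigr)^{1/\alpha},
\]
using $\tfrac{1-\alpha}{\alpha^2}=\tfrac1\alpha\bigl(\tfrac1\alpha-1\bigr)$. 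Combining, $Y_0\le\bigl(\tfrac{2C}{b^{1-1/\alpha}}\bigr)^{1/\alpha}+2\mathcal K$, as claimed.

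The only genuine point of the argument is the calibration of the Young parameter $\mu$ (equivalently $\nu$): the absorption coefficient in front of $Y_{n+1}$ must simultaneously be $<1$ — so that the iteration contracts and the tail $\nu^m Y_m$ dies against equiboundedness — \emph{and} small enough that $\nu b^{1/\alpha}<1$, so that the geometric series carrying the \emph{growing} weight $b^{n/\alpha}$ still converges. This forces $\nu<b^{-1/\alpha}$, and the choice $\nu=\tfrac12 b^{-1/\alpha}$ achieves both while tuning the final constant to the stated form; everything else is bookkeeping.
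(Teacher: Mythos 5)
Your proposal is correct and follows essentially the same route as the paper: a Young-inequality split of $Cb^nY_{n+1}^{1-\alpha}$ into an absorption term $\varepsilon Y_{n+1}$ plus an explicit remainder, iteration, use of equiboundedness to kill $\varepsilon^m Y_m$, and the calibration $\varepsilon b^{1/\alpha}=\tfrac12$ to sum the geometric series. The only difference is cosmetic bookkeeping (you fix the Young parameter at the outset and keep the weights $\alpha,1-\alpha$, discarding them at the end), so the constants come out identically.
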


\begin{proof}
    For $\ve\in (0,1)$, we write
    \[ Y_{n} \leq C b^n \ve^{\alpha-1} (\ve Y_{n+1})^{1-\alpha}+\mathcal{K} \] and apply Young's inequality with exponents $\frac{1}{1-\alpha}$ and $\frac{1}{\alpha}$ to get 
    \[ Y_{n} \leq \ve Y_{n+1} + \left( \frac{C}{\ve^{1-\alpha}}\right)^{\frac{1}{\alpha}}b^{\frac{n}{\alpha}} +\mathcal{K}. \]
    By iteration, we receive
    \[ Y_{0} \leq \ve^{n} Y_{n} + \left( \frac{C}{\ve^{1-\alpha}}\right)^{\frac{1}{\alpha}}\sum_{i=0}^{n-1}\left(b^{\frac{1}{\alpha}}\ve\right)^i +\mathcal{K}\sum_{i=0}^{n-1}\ve^i. \]Now, choose $b^{1/\alpha}\ve=\frac{1}{2}$ and take the limit as $n\to\infty$ to conclude the result.
\end{proof}

\section{Caccioppoli inequality}\label{sec3}

In this section, we will prove a Caccioppoli inequality for parabolic minimizers of~\cref{maineq}. The notion of parabolic minimizers was defined in~\cref{defparmin}. In the paper~\cite{prasadExistenceVariationalSolutions2021a}, we have shown that a variational solution is a parabolic minimizer. Hence, there is no loss of generality to work with parabolic minimizers. 

\begin{lemma}
    \label{Caccioppoli}
    Let $u\in C^0(0,T;L^2(\Om))\cap L^p(0,T;W^{s,p}_{(u_0)}(\Om))$ be a variational solution of~\cref{maineq}, where the initial data satisfies~\cref{datahypo}. Further assume that
    \[u\in L^\infty(0,T;L^{p-1}_{sp}(\RR^N))\cap L^\infty(0,T;L^{q-1}_{qs'}(\RR^N)).\] Then for all cylinders $Q_{R,\theta}(z_0)\Subset \Om_T$ and $k,\tau_1,\tau_2$ with $0<r\leq \rho_1<\rho_2\leq R, t_0-\tht<t_0-\tau_2<t_0-\tau_1\leq t_0-\frac{\tht}{2}$ and $q\leq \delta\leq p_*$, we have
    \begin{align}
        \sup\limits_{t_0-\tau_1<t<t_0}\int\limits_{B_{r}(x_0)}|w_+(\cdot,t)|^2\,dx&+\int\limits_{t_0-\tau_1}^{t_0}\int\limits_{B_{r}(x_0)}\int\limits_{B_{r}(x_0)}\frac{H(x,y,w_+(x,t)-w_+(y,t))}{|x-y|^N}\,dx\,dy\,dt\nonumber\\
        &\leq C\frac{R^{p(1-s)}}{(R-r)^p}\norm{w_+}^p_{L^p(Q_{R,\tau_2}(z_0))}+ C\frac{R^{q(1-s')}}{(R-r)^q}\norm{w_+}^q_{L^q(Q_{R,\tau_2}(z_0))}\nonumber\\
        &\quad\quad + C\frac{R^N}{(R-r)^{N+sp}}\norm{w_+}_{L^1(Q_{R,\tau_2}(z_0))}\text{Tail}_{p,s,\infty}(w_+;x_0,r,I)^{p-1}\nonumber\\
        &\quad\quad\quad + C\frac{R^N}{(R-r)^{N+s'q}}\norm{w_+}_{L^1(Q_{R,\tau_2}(z_0))}\text{Tail}_{q,s',\infty}(w_+;x_0,r,I)^{q-1}\nonumber\\
        &\quad\quad\quad\quad + C\int\limits_{t_0-\tau_2}^{t_0}\int\limits_{B_{R}(x_0)} w_+^2(x,t)|\de_t\psi(t)|\,dx\,dt. 
    \end{align}  where $w_+=(u-k)_+$, $\psi \in C^1(t_0-\theta,t_0)$ be a test function such that $\psi = 0$ on $(t_0-\theta,t_0-\tau_2)$, $\psi = 1$ on $(t_0-\tau_1,t_0)$ and $0\leq \psi \leq 1$ and the positive constant $C=C(N,p,q,M,s,s')$.
\end{lemma}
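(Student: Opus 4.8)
The plan is to test the variational/minimality inequality with the comparison map $v = u - \psi\,\eta^?\,w_+$ obtained by subtracting off the positive part of $u-k$ truncated in space and time. More precisely, I would let $\xi\in C_0^\infty(B_{\rho_2}(x_0))$ be a spatial cutoff with $\xi\equiv 1$ on $B_{\rho_1}$, $0\le\xi\le 1$, $|\nabla\xi|\lesssim (\rho_2-\rho_1)^{-1}$, let $\psi$ be the given temporal cutoff, and take the competitor $v = u - \psi\,\xi^{\,q}\,w_+$ (the power $q$ chosen so that algebraic manipulations of $H$, which has top growth $q$, go through cleanly). Since $u$ is a variational solution with $\partial_t u\in L^2$, I can use the equivalent form of the inequality from the Remark, localized to $C_\Omega$, and the term $\int \partial_t v\,(v-u)$ produces, after integrating the time derivative by parts against $\psi$, the $L^2$-in-space supremum term on the left together with the error $\int\!\!\int w_+^2|\partial_t\psi|$ on the right.

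The core of the argument is the algebraic estimate for the nonlocal part. Writing $a=u(x,t)$, $b=u(y,t)$, one must compare $H(x,y,a-b)$ with $H\big(x,y,(a-\psi\xi(x)^q w_+(x))-(b-\psi\xi(y)^q w_+(y))\big)$. I would split $C_\Omega = (B_{\rho_2}\times B_{\rho_2}) \cup \big((\mathbb R^N\setminus B_{\rho_2})\times B_{\rho_2}\big)\cup \big(B_{\rho_2}\times(\mathbb R^N\setminus B_{\rho_2})\big)$ (the last by symmetry equals the middle). On the diagonal block one uses the standard convexity/monotonicity inequalities for $|\cdot|^p$ and $|\cdot|^q$ (of the type $|a-b|^{p-2}(a-b)\big((a-\bar a)-(b-\bar b)\big)\ge c|\,\bar a-\bar b|^p - \ldots$), applied separately to the $|x-y|^{-N-ps}$ and the $a(x,y)|x-y|^{-N-qs'}$ pieces, to extract a good term $\iint_{B_r\times B_r} H(x,y,w_+(x)-w_+(y))/|x-y|^N$ plus error terms controlled by $\iint \xi^{?}(w_+(x)^p+w_+(y)^p)|\xi(x)-\xi(y)|^p/|x-y|^{N+ps}$ and the analogous $q$-term; these in turn are bounded, via $|\xi(x)-\xi(y)|\le \min\{1,|x-y|/(\rho_2-\rho_1)\}$ and the standard computation $\int |x-y|^{-N-ps}\min\{1,|x-y|^\gamma/\delta^\gamma\}\,dy \lesssim \delta^{-ps}$, by $\frac{R^{p(1-s)}}{(R-r)^p}\|w_+\|_{L^p(Q_R)}^p$ and its $q$-counterpart after scaling $\rho_2-\rho_1\sim R-r$ and using $\rho_i\le R$. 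On the off-diagonal (tail) blocks, where $y\notin B_{\rho_2}$ so $w_+(y)$ may be large but $\xi(y)=0$, the increment is controlled by $w_+(x)\,|u(x,t)-u(y,t)|^{p-1}|x-y|^{-N-ps}$ (and the $q$-analogue); using $(u(x)-u(y))_+ \le (u(x)-k)_+ + (u(y)-k)_+$ one estimates $|u(x,t)-u(y,t)|^{p-1}\lesssim w_+(x)^{p-1} + w_+(y,t)^{p-1}$, integrates the $w_+(x)^{p-1}$ contribution into the $L^p$ term and the $w_+(y)^{p-1}$ contribution into $\int_{B_r} w_+(x,t)\,dx \cdot \frac{R^N}{(R-r)^{N+sp}}\,\mathrm{Tail}_{p,s,\infty}(w_+)^{p-1}$, exactly as in the De Giorgi–Nash–Moser machinery for the fractional $p$-Laplacian.

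Assembling: the left side of the minimality inequality, after the manipulations above, is bounded below by $\frac12\sup_t\int_{B_r}w_+^2 + c\iint_{B_r\times B_r}H(x,y,w_+(x)-w_+(y))/|x-y|^N$ minus the error terms, while the right side contributes precisely the five terms on the right-hand side of the claimed inequality. The single genuinely technical obstacle is the off-diagonal/tail bookkeeping together with making sure the $\delta$-dependence (via the constraint $q\le\delta\le p_*$) never actually enters the Caccioppoli estimate itself — it is stated as a hypothesis only because the downstream Sobolev iteration needs it, so here I would simply carry $\delta$ along without using it, and check that every error term is absorbed either into the left-hand side (choosing the competitor's cutoff power and, if needed, an application of \cref{iterlemma} on the radii $r\le\rho_1<\rho_2\le R$ to swallow a small multiple of the diagonal Gagliardo term) or into one of the five listed right-hand terms, with the final constant depending only on $N,p,q,M,s,s'$.
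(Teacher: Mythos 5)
Your proposal is correct and follows essentially the same route as the paper: a competitor of the form $u$ minus a spatial cutoff power times $w_+$, convexity of $H$ plus the Cozzi-type pointwise inequalities in a case analysis over the level set, a diagonal/off-diagonal split of $C_\Omega$ with the tail handled via $|x-y|\gtrsim\frac{\rho_2-\rho_1}{R}|y|$, the time term integrated by parts to yield the supremum and the $\iint w_+^2|\partial_t\psi|$ error, and absorption of the larger-ball Gagliardo term through \cref{iterlemma}; the differences (cutoff power $q$ instead of the paper's $p+q$, and placing $\psi$ inside the competitor rather than using the paper's convex-combination-in-time device $\eta=\chi_\epsilon\psi^2$) are cosmetic. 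The one imprecise step, the claim $|u(x,t)-u(y,t)|^{p-1}\lesssim w_+(x,t)^{p-1}+w_+(y,t)^{p-1}$ on the off-diagonal blocks, fails when $u(y,t)<k$, but this is harmless since in that case $v(x,t)-v(y,t)$ and $u(x,t)-u(y,t)$ are nonnegative with $v(x,t)-v(y,t)\le u(x,t)-u(y,t)$, so the increment $H(v(x,t)-v(y,t))-H(u(x,t)-u(y,t))$ is nonpositive and is simply dropped, exactly as in the paper's treatment of that case.
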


\begin{proof}
    Without loss of generality, we assume that $x_0 = 0$. For clarity of reading we write, 
    \[
    \frac{\,dx\,dy}{|x-y|^N} = dm
    \]
    \[
    H(x,y,f(x,t)-f(y,t)) = H(f(x,t)-f(y,t))
    \]
    and
    \[
    \eta(t)\,dm\,dt = \,dm_t
    \]
    where $0 \leq \eta(t) \leq 1$ is a cutoff function based on the time interval $I = (t_0-\theta,t_0)$. In fact, we choose 
    \[
    \eta(t) = \chi_{\epsilon}(t)\psi^2(t)
    \]
    where 
    \begin{itemize}
        \item $\psi \in C^1(t_0-\theta,t_0)$,
        \item $\psi = 0$ on $(t_0-\theta,t_0-\tau_2)$,
        \item  $\psi = 1$ on $(t_0-\tau_1,t_0)$ and 
        \item $0\leq \psi \leq 1$
    \end{itemize}
    \[
    \chi_{\epsilon}(t) = \left\{
            \begin{array}{lll}
                1 & \mbox{ in } [t_0-\theta,\tau] \\
                1-\frac{1}{\epsilon}(t-\tau) &\text{ on } (\tau,\tau+\epsilon) \\ 
                0 &\text{ otherwise }
            \end{array}
        \right.
    \]
    and $\tau \in (t_0-\tau_1,t_0)$ is arbitrary.

    In the definition of parabolic minimizer~\cref{defparmin}, we test with 
    \[
    \phi = - \eta(t)\xi^{b}(x)w_{+}
    \]
    and invoke the convexity of $H$ in the final variable to compute
    \begin{align*}
    H((u+\phi)(x,t)&-(u+\phi)(y,t)) = \\
        &H((1-\eta)(u(x,t) - u(y,t)) + \eta(u(x,t)-u(y,t)+\phi^*(x,t)    -\phi^*(y,t)))\\
        &\leq (1-\eta)H(u(x,t)-u(y,t)) + \eta H(v(x,t)-v(y,t))
    \end{align*}
    
    where
    \[
    \phi^* = -\xi^{b}(x)w_{+}
    \]
    \[
    v = u + \phi^* = u -\xi^{b}(x)w_{+},
    \]
    and $0 \leq \xi(x) \leq 1$ is a smooth test function supported in $B_{(\rho_1+\rho_2)/2}$ with $\xi = 1$ on $B_{\rho_1}$. 
    Since $u = v$ outside the ball $B_{\rho_2}$, we are led to the following inequality
    \begin{align}\label{best1}
    \int_{\text{supp}\,\phi} u\phi_t \,dz \leq \int\limits_{I}\iint\limits_{C_{B_{\rho_2}}}  H(v(x,t)-v(y,t)) - H(u(x,t)-u(y,t)) \,dm_t 
    \end{align}

    We begin by estimating the right hand side of \cref{best1}. We split the integral as 
    \begin{align*}
        \int\limits_{I}\iint\limits_{C_{B_{\rho_2}}} & = \int\limits_{I}\iint\limits_{B_{\rho_2}^2} +  \int\limits_{I}\iint\limits_{B_{\rho_2} \times (\bb{R}^N\setminus B_{\rho_2})} + \int\limits_{I}\iint\limits_{(\bb{R}^N\setminus B_{\rho_2})\times B_{\rho_2} }\\
    & =  \int\limits_{I}\iint\limits_{B_{\rho_1}^2}\, +\, \int\limits_{I}\iint\limits_{B_{\rho_2}^2 \setminus B_{\rho_1}^2} \, +\,\int\limits_{I}\iint\limits_{B_{\rho_2} \times (\bb{R}^N\setminus B_{\rho_2})} + \int\limits_{I}\iint\limits_{(\bb{R}^N\setminus B_{\rho_2})\times B_{\rho_2} }\\
    & := A_1 + A_2 + A_3 + A_4,
    \end{align*}   with the obvious meanings for $A_1, A_2, A_3$ and $A_4$.

    To estimate the local integrals $A_1$ and $A_2$ we consider the following cases pointwise for $(x,t)$ and $(y,t)$ in $C_{B_{\rho_2}} \times I$. 
    \begin{itemize}
        \item If $(x,t) \notin A_+(k)$ or $(y,t) \notin A_+(k)$ then
        \begin{equation}\label{eq:A}
            H(v(x,t)-v(y,t)) - H(u(x,t)-u(y,t)) \leq 0
        \end{equation}
        \item If $(x,t), (y,t) \in A_+(k,\rho_1,I)$ then
        \begin{equation}\label{eq:B}
            H(v(x,t)-v(y,t)) - H(u(x,t)-u(y,t)) = - H(w_+(x,t)-w_+(y,t))
        \end{equation}
        \item If $(x,t) \in A_+(k,\rho_1,I)$ and $ (y,t) \notin A_+(k)$ then
        \begin{align}\label{eq:C}
        H(v(x,t)-v(y,t)) - H(u(x,t)-u(y,t)) \leq - \frac{1}{2}& H(w_+(x,t)-w_+(y,t)) - \frac{p}{2}w_+(x,t)\frac{w_{-}^{p-1}(y,t)}{|x-y|^{sp}} \nonumber\\   &-a(x,y)\frac{q}{2}(y,t)w_+(x,t)\frac{w_{-}^{q-1}(y,t)}{|x-y|^{s'q}}
        \end{align}
        \item If $(x,t),(y,t) \in A_+(k)$ then
        \begin{align}\label{eq:D}
        H(v(x,t)-v(y,t)) \leq H(w_+(x,t)-w_+(y,t)) + b^p\max\{w_+(x,t),\,w_+(y,t)\}^p\frac{|\xi(x) -\xi(y)|^p}{|x-y|^{sp}}\nonumber \\   +M\,b^q\max\{w_+(x,t),\,w_+(y,t)\}^q\frac{|\xi(x) -\xi(y)|^q}{|x-y|^{s'q}}
        \end{align}
    \end{itemize}
        
    The estimates in $\eqref{eq:A}$ and $\eqref{eq:B}$ follow directly from the definition of $v$. For the third item we note that if $(x,t) \in A_+(k,\rho_1,I)$ and $ (y,t) \notin A_+(k)$ then
    \[
    |v(x,t) - v(y,t)| = w_{-}(y,t)
    \]
    and
    \[
    |u(x,t) - u(y,t)| = w_{+}(x,t) + w_{-}(y,t).
    \]
    Invoking the following inequality (\cite[Lemma 4.1]{cozziRegularityResultsHarnack2017}) for $a,b \geq 0$ and any $r \geq 1$ 
    \[
    a^r - (a+b)^r \leq  - \frac{1}{2}b^r -\frac{1}{2}ra^{r-1}b
    \]
    and recalling that $w_+(y,t) = 0$ for $(y,t) \notin A_{+}(k)$ we get $\eqref{eq:C}$. Finally, let $(x,t),(y,t) \in A_+(k)$. We can assume without loss of generality that $\xi(x) \geq \xi(y)$ since the estimate is symmetric in $x$ and $y$; further, if $\xi(x) = 0$ then $\xi(y) = 0$ and the estimate is clearly true, so we can also assume that $\xi(x)>0$. In particular, the convexity of $X \mapsto X^b$ implies that
    \[
    |\xi^b(x) - \xi^b(y)| \leq b\xi^{b-1}(x)|\xi(x) - \xi(y)| 
    \]
    Next, we note that
    \begin{align*}
    |v(x,t) - v(y,t)| = |(1-\xi^b(x))(w_+(x,t)-w_+(y,t))+(\xi^b(y)-\xi^b(x))w_+(y,t)|    
    \end{align*}
    Since $H$ is convex in the final variable, we get
    \begin{align*}
        H(v(x,t) - v(y,t)) 
        &\leq H\left((1-\xi^b(x))|w_+(x,t)-w_+(y,t)|+\xi^b(x)\frac{bw_+(y,t) |\xi(y)-\xi(x)|}{\xi(x)}\right)
        \\
        &\leq (1-\xi^b(x))H\left(|w_+(x,t)-w_+(y,t)|\right)+\xi^b(x)H\left(\frac{bw_+(y,t) |\xi(y)-\xi(x)|}{\xi(x)}\right)
        \\
        &\leq H\left(|w_+(x,t)-w_+(y,t)|\right) + \frac{\xi^b(x)}{\xi^p(x)}b^pw^p_+(y,t)\frac{|\xi(x) -\xi(y)|^p}{|x-y|^{sp}}\\
        &\qquad+ M\frac{\xi^b(x)}{\xi^q(x)}b^qw^q_+(y,t)\frac{|\xi(x) -\xi(y)|^q}{|x-y|^{s'q}}
    \end{align*}
    Since the inequality is symmetric in $x,y$ we get \eqref{eq:D}. It is here that we fix the choice of $b$ to be $p+q$.  
    
    We now estimate the local integrals $A_1$ and $A_2$. For the first local integral $A_1$, we use \eqref{eq:A},\eqref{eq:B} and \eqref{eq:C} to get
    \begin{align*}
        A_1=\int\limits_{I}\iint\limits_{B_{\rho_1}^2} & H(v(x,t)-v(y,t)) - H(u(x,t)-u(y,t)) \,dm_t\\
        &\leq   -\frac{1}{2}\int\limits_{I}\iint\limits_{B_{\rho_1}^2}H(w_+(x,t)-w_+(y,t))\,dm_t - \frac{p}{2} \int\limits_{I}\iint\limits_{B_{\rho_1}^2}w_+(x,t)\frac{w_{-}^{p-1}(y,t)}{|x-y|^{sp}} \,dm_t \\   &\qquad -a(x,y)\frac{q}{2}\int\limits_{I}\iint\limits_{B_{\rho_1}^2}w_+(x,t)\frac{w_{-}^{q-1}(y,t)}{|x-y|^{s'q}} \,dm_t
    \end{align*}
    For the second local integral $A_2$, we use \eqref{eq:A}, \eqref{eq:C} and \eqref{eq:D}  to get
    \begin{align*}
        A_2=\int\limits_{I}\iint\limits_{B_{\rho_2}^2 \setminus B_{\rho_1}^2} & H(v(x,t)-v(y,t)) - H(u(x,t)-u(y,t)) \,dm_t\\ 
        &\leq \int\limits_{I}\iint\limits_{B_{\rho_2}^2 \setminus B_{\rho_1}^2}H(w_+(x,t)-w_+(y,t)) \, dm_t \\
        &\qquad+ \int\limits_{I}\iint\limits_{B_{\rho_2}^2 \setminus B_{\rho_1}^2} b^p\max\{w_+(x,t),\,w_+(y,t)\}^p\frac{|\xi(x) -\xi(y)|^p}{|x-y|^{sp}} \,dm_t \\  &\qquad\qquad+\int\limits_{I}\iint\limits_{B_{\rho_2}^2 \setminus B_{\rho_1}^2}M\,b^q\max\{w_+(x,t),\,w_+(y,t)\}^q\frac{|\xi(x) -\xi(y)|^q}{|x-y|^{s'q}}\,dm_t \\
        &\qquad\qquad\qquad- \frac{p}{2}\int\limits_{I}\int\limits_{B_{\rho_1}}w_+(x,t)\int\limits_{B_{\rho_2}\setminus B_{\rho_1}
        }\frac{w_{-}^{p-1}(y,t)}{|x-y|^{sp}}  \, dm_t \\  &\qquad\qquad\qquad\qquad-\frac{q}{2}\int\limits_{I}\int\limits_{B_{\rho_1}}w_+(x,t)\int\limits_{B_{\rho_2}\setminus B_{\rho_1}
        }a(x,y)\frac{w_{-}^{q-1}(y,t)}{|x-y|^{s'q}}\,dm_t
    \end{align*}
    Combining the estimates for $A_1$ and $A_2$, we get
    \begin{align*}
       A_1+A_2=\int\limits_{I}\iint\limits_{B_{\rho_2}^2} & H(v(x,t)-v(y,t)) - H(u(x,t)-u(y,t)) \,dm_t \\ 
       &\leq  -\frac{1}{2}\int\limits_{I}\iint\limits_{B_{\rho_1}^2}H(w_+(x,t)-w_+(y,t))\,dm_t \\
       &\qquad+  \int\limits_{I}\iint\limits_{B_{\rho_2}^2 \setminus B_{\rho_1}^2}H(w_+(x,t)-w_+(y,t)) \, dm_t \\
       &\qquad\qquad+ \int\limits_{I}\iint\limits_{B_{\rho_2}^2 \setminus B_{\rho_1}^2} b^p\max\{w_+(x,t),\,w_+(y,t)\}^p\frac{|\xi(x) -\xi(y)|^p}{|x-y|^{sp}} \,dm_t \\   &\qquad\qquad\qquad+\int\limits_{I}\iint\limits_{B_{\rho_2}^2 \setminus B_{\rho_1}^2}Mb^q\max\{w_+(x,t),\,w_+(y,t)\}^q\frac{|\xi(x) -\xi(y)|^q}{|x-y|^{s'q}}\,dm_t,
    \end{align*} where we have dropped two negative terms.
    
    Next, we estimate the nonlocal integral $A_3$. We have that 
    \begin{itemize}
        \item if $(x,t),(y,t) \notin A_+(k,(\rho_1+\rho_2)/2,I)$ then \begin{equation}\label{eq:E}
            |v(x,t)-v(y,t)| = |u(x,t)-u(y,t)|
        \end{equation}
        \item if $(x,t) \in A_+(k)$ and $y \in A_+(k) \setminus B_{\rho_2}$ then
        \begin{equation}\label{eq:F}
            H(v(x,t)-v(y,t)) - H(u(x,t)-u(y,t)) 
            \leq pw_+(x,t)\frac{w_{+}^{p-1}(y,t)}{|x-y|^{sp}} + qMw_+(x,t)\frac{w_{+}^{q-1}(y,t)}{|x-y|^{s'q}} 
        \end{equation}
        where we recall that $M \geq a(x,y) \geq 0$.  
    \end{itemize}
    To see that \eqref{eq:F} holds we write for $r \in \{p,q\}$ and $f \in \{1,a(x,y)\}$
    \[
    |v(x,t)-v(y,t)|^rf - |u(x,t)-u(y,t)|^rf  = |(1-\xi^b(x))w_+(x,t)-w_+(y,t)|^rf - |w_+(x,t)-w_+(y,t)|^rf 
    \]
    and employ the following inequality (\cite[Lemma 4.2]{cozziRegularityResultsHarnack2017}) \[
    |\delta a - b|^r - |a-b|^r \leq rb^{r-1}a
    \]   
    which holds whenever $r\geq 1$, $a,b \geq 0$ and $0 \leq \delta \leq 1$. So using \eqref{eq:A},\eqref{eq:C},\eqref{eq:E} and \eqref{eq:F} we get
    \begin{align*}
        A_3= \iint\limits_{B_{\rho_2} \times (\bb{R}^N\setminus B_{\rho_2})} & H(v(x,t)-v(y,t)) - H(u(x,t)-u(y,t)) \,dm_t
        \\
        &\leq \int\limits_{I} \int\limits_{B_{\frac{{\rho_1+\rho_2}}{2}}}\int\limits_{\bb{R}^N\setminus B_{\rho_2}}p w_+(x,t)\frac{w_{+}^{p-1}(y,t)}{|x-y|^{sp}} + qMw_+(x,t)\frac{w_{+}^{q-1}(y,t)}{|x-y|^{s'q}} \,dm_t
        \\
        &\qquad- \int\limits_{I} \int\limits_{B_{\rho_1}}\int\limits_{\bb{R}^N \setminus B_{\rho_2}}\frac{p}{2}w_+(x,t)\frac{w_{-}^{p-1}(y,t)}{|x-y|^{sp}}+a(x,y)\frac{q}{2}(y,t)w_+(x,t)\frac{w_{-}^{q-1}(y,t)}{|x-y|^{s'q}} \,dm_t
    \end{align*}
    where we are integrating first in the variable $y$ and then in the variable $x$. The estimate for $A_4$ is similar. Combining the estimates for $A_1, A_2, A_3$ and $A_4$ with \cref{best1}, we get   
    \begin{align}
       \int_{\text{supp}\,\phi} & u\phi_t \,dz + \frac{3}{2}\int\limits_{I}\iint\limits_{B_{\rho_1}^2}H(w_+(x,t)-w_+(y,t))\,dm_t\nonumber\\
       &\leq  \int\limits_{I}\iint\limits_{B_{\rho_2}^2}H(w_+(x,t)-w_+(y,t))\,dm_t \nonumber\\
       &\quad+ C\underbrace{\int\limits_{I}\iint\limits_{B_{\rho_2}^2 \setminus B_{\rho_1}^2} \max\{w_+(x,t),\,w_+(y,t)\}^p\frac{|\xi(x) -\xi(y)|^p}{|x-y|^{sp}} \,dm_t}_{Z_1} \nonumber\\   
       &\quad\quad+MC\underbrace{\int\limits_{I}\iint\limits_{B_{\rho_2}^2 \setminus B_{\rho_1}^2}\max\{w_+(x,t),\,w_+(y,t)\}^q\frac{|\xi(x) -\xi(y)|^q}{|x-y|^{s'q}}\,dm_t}_{Z_2}\nonumber\\
       &\quad\quad\quad+ C\underbrace{\int\limits_{I} \int\limits_{B_{\frac{{\rho_1+\rho_2}}{2}}}\int\limits_{\bb{R}^N\setminus B_{\rho_2}} w_+(x,t)\frac{w_{+}^{p-1}(y,t)}{|x-y|^{sp}}}_{Z_3}\nonumber \\
       &\quad\quad\quad\quad + MC\underbrace{\int\limits_{I} \int\limits_{B_{\frac{{\rho_1+\rho_2}}{2}}}\int\limits_{\bb{R}^N\setminus B_{\rho_2}}w_+(x,t)\frac{w_{+}^{q-1}(y,t)}{|x-y|^{s'q}} \,dm_t}_{Z_4}\label{best2}
    \end{align}
    where $C > 0$ is a constant depending only on $p,q$. We have once again dropped the negative terms coming from $A_3$ and $A_4$. 
    
    \paragraph{Estimate for $Z_3$ and $Z_4$}
    If $x \in B_{(\rho_1+\rho_2)/2}$ and $y\notin B_{\rho_2}$ then
    \[
    |x-y| \geq |y| - |x| \geq |y| - \frac{\rho_1+\rho_2}{2\rho_2}|y| \geq \frac{\rho_2-\rho_1}{2R}|y|
    \]
    and so we may estimate the nonlocal integrals for $g \in \{p,q\}$ and $h \in \{s,s'\}$ as
    \begin{align}
    \int\limits_{I} \eta^2(t) \int\limits_{B_{\frac{{\rho_1+\rho_2}}{2}}} & w_+(x,t)\int\limits_{\bb{R}^N\setminus B_{\rho_2}}  \frac{w_{+}^{g-1}(y,t)}{|x-y|^{gh+N}} \, dy \, dx \, dt\nonumber\\
    &\leq C\left(\frac{R}{\rho_2-\rho_1}\right)^{N+gh}\int\limits_{I} \eta^2(t) \int\limits_{B_{\frac{{\rho_1+\rho_2}}{2}}}w_+(x,t)\int\limits_{\bb{R}^N\setminus B_{\rho_2}} \frac{w_{+}^{g-1}(y,t)}{|y|^{gh+N}} \, dy \, dx \, dt \nonumber\\
    &\qquad\leq C\left(\frac{R}{\rho_2-\rho_1}\right)^{N+gh}\norm{w_+}_{L^1(Q_{R,\tau_2})}\text{Tail}_{g,h,\infty}(w_+;0,r,I)^{g-1}R^{-gh}\label{best3}
    \end{align}

    \paragraph{Estimate for $Z_1$ and $Z_2$}
    We also have 
    \begin{align}
        \int\limits_{I}\iint\limits_{B_{\rho_2}^2 \setminus B_{\rho_1}^2}\max\{w_+(x,t),\,w_+(y,t)\}^g & \frac{|\xi(x) -\xi(y)|^g}{|x-y|^{gh}}\,dm_t\nonumber \\
        &\leq \int\limits_{I}\iint\limits_{B_{R}^2 }\max\{w_+(x,t),\,w_+(y,t)\}^g\frac{|\xi(x) -\xi(y)|^g}{|x-y|^{gh}}\,dm_t\nonumber
        \\
        &\qquad\leq C\frac{R^{g(1-h)}}{(\rho_2-\rho_1)^g}\norm{w_+}^g_{L^g(Q_{R,\tau_2})}\label{best4}
    \end{align}
    The constants $C$ above now also depend on the dimension.
    
    Substituting \cref{best3} and \cref{best4} in \cref{best2}, we get
    \begin{align}
        \int_{\text{supp}\in \phi} u\phi_t \,dz + \frac{3}{2}\int\limits_{I}\iint\limits_{B_{\rho_1}^2} & H(w_+(x,t)-w_+(y,t))\,dm_t \leq \int\limits_{I}\iint\limits_{B_{\rho_2}^2}H(w_+(x,t)-w_+(y,t))\,dm_t \nonumber\\ 
        &+C\frac{R^{p(1-s)}}{(\rho_2-\rho_1)^p}\norm{w_+}^p_{L^p(Q_{R,\tau_2})}+ MC\frac{R^{q(1-s')}}{(\rho_2-\rho_1)^q}\norm{w_+}^q_{L^q(Q_{R,\tau_2})}\nonumber\\
        &\qquad+C\left(\frac{R}{\rho_2-\rho_1}\right)^{N+sp}\norm{w_+}_{L^1(Q_{R,\tau_2})}\text{Tail}_{p,s,\infty}(w_+;0,r,I)^{p-1}R^{-sp}\nonumber\\
        &\qquad\qquad+ MC\left(\frac{R}{\rho_2-\rho_1}\right)^{N+s'q}\norm{w_+}_{L^1(Q_{R,\tau_2})}\text{Tail}_{q,s',\infty}(w_+;0,r,I)^{q-1}R^{-s'q}\label{best5}
    \end{align}
    where $C = C(p,q,N) > 0$. 
    
    We compute the first term on the left hand side of \cref{best5}
    \begin{align}
        \int_{\text{supp}\in \phi} u\phi_t \,dz &=-\int_0^T \int_\Om \partial_t u \, \phi \,dz\nonumber\\
        &=\int_0^T\int_\Om \partial_t u\,\chi_{\ve}(t)\psi^2(t)\xi^b(x)(u-k)_+\,dz\nonumber\\
        &= \int_0^T\int_{\Omega} \frac{1}{2}\de_t (w_+)^2 \chi_{\epsilon}(t)\psi^2(t)\xi^b(x) \,dz \nonumber\\
        &= - \frac{1}{2}\int_0^T\int_{\Omega}(w_+)^2\left(\chi_{\epsilon}'\psi^2\xi^b + 2\chi_{\epsilon}\psi\psi'\xi^b\right)\,dz\nonumber\\
        & \xrightarrow{\epsilon \rightarrow 0+} \frac{1}{2}\int_{\Omega}|w_+(\cdot,\tau)|^2 \xi^b\,dx - \int_0^T\chi_{(t_0-\theta,\tau)}\psi\int_{\Omega} w_+^2 |\de_t \psi| \xi^b \, dz \label{best6}
    \end{align}
    Since $\tau \in (t_0-\tau_1,t_0)$ and $\xi = 1$ on $B_{\rho_1}$ we combine \cref{best6} and \cref{best5} to get
    \begin{align}
        \sup\limits_{t_0-\tau_1<t<t_0}&\int\limits_{B_{\rho_1}}|w_+(\cdot,t)|^2\,dx + \frac{3}{2}\int\limits_{I}\iint\limits_{B_{\rho_1}^2} H(w_+(x,t)-w_+(y,t))\,dm_t \nonumber\\
        &\leq \int\limits_{I}\iint\limits_{B_{\rho_2}^2}H(w_+(x,t)-w_+(y,t))\,dm_t\nonumber\\
        &\quad+C\frac{R^{p(1-s)}}{(\rho_2-\rho_1)^p}\norm{w_+}^p_{L^p(Q_{R,\tau_2})}+ MC\frac{R^{q(1-s')}}{(\rho_2-\rho_1)^q}\norm{w_+}^q_{L^q(Q_{R,\tau_2})}\nonumber\\
        &\quad\quad+ C\left(\frac{R}{\rho_2-\rho_1}\right)^{N+sp}\norm{w_+}_{L^1(Q_{R,\tau_2})}\text{Tail}_{p,s,\infty}(w_+;0,r,I)^{p-1}R^{-sp}\nonumber\\
        &\quad\quad+MC\left(\frac{R}{\rho_2-\rho_1}\right)^{N+s'q}\norm{w_+}_{L^1(Q_{R,\tau_2})}\text{Tail}_{q,s',\infty}(w_+;0,r,I)^{q-1}R^{-s'q}\nonumber \\
        &\quad\quad\quad+ \int\limits_{t_0-\tau_2}^{t_0}\int\limits_{B_{\rho_2}} w_+^2(x,t)|\de_t\psi(t)|\,dx\,dt\label{best7}
    \end{align}
    where, due to the passage of limit $\epsilon \rightarrow 0+$, $dm_t$ becomes
    \[
    dm_t = \psi^2(t)\chi_{(t_0-\theta,t_0)}(t) dm.
    \]

    The convergence on the left hand side is due to Fatou's lemma and the convergence on the right hand side is due to dominated convergence theorem.

    Now \cref{iterlemma} implies that we can absorb the first integral on the right hand side of \cref{best7} into the spacetime integral on the left hand side of \cref{best7} to get
    \begin{align*}
        \sup\limits_{t_0-\tau_1<t<t_0}\int\limits_{B_{r}}|w_+(\cdot,t)|^2\,dx & + \int\limits_{I}\iint\limits_{B_{r}^2} H(w_+(x,t)-w_+(y,t))\,dm_t \\
        &\lesssim \frac{R^{p(1-s)}}{(R-r)^p}\norm{w_+}^p_{L^p(Q_{R,\tau_2})}+ \frac{R^{q(1-s')}}{(R-r)^q}\norm{w_+}^q_{L^q(Q_{R,\tau_2})}\\
        &\quad+ \left(\frac{R}{R-r}\right)^{N+sp}\norm{w_+}_{L^1(Q_{R,\tau_2})}\text{Tail}_{p,s,\infty}(w_+;0,r,I)^{p-1}R^{-sp}\\
        &\quad\quad+\left(\frac{R}{R-r}\right)^{N+s'q}\norm{w_+}_{L^1(Q_{R,\tau_2})}\text{Tail}_{q,s',\infty}(w_+;0,r,I)^{q-1}R^{-s'q} \\
        &\quad\quad\quad+ \int\limits_{t_0-\tau_2}^{t_0}\int\limits_{B_{R}} w_+^2(x,t)|\de_t\psi(t)|\,dx\,dt
    \end{align*}    
    Finally, recalling what $dm_t$ stands for we get    
    \begin{align*}
            \sup\limits_{t_0-\tau_1<t<t_0}&\int\limits_{B_{r}}|w_+(\cdot,t)|^2\,dx+\int\limits_{t_0-\tau_1}^{t_0}\int\limits_{B_{r}}\int\limits_{B_{r}}\frac{H(x,y,w_+(x,t)-w_+(y,t))}{|x-y|^N}\,dx\,dy\,dt\nonumber\\
            &\lesssim \frac{R^{p(1-s)}}{(R-r)^p}\norm{w_+}^p_{L^p(Q_{R,\tau_2})}\\
            &\qquad + \frac{R^{q(1-s')}}{(R-r)^q}\norm{w_+}^q_{L^q(Q_{R,\tau_2})}\\
            &\qquad + \left(\frac{R}{R-r}\right)^{N+sp}\norm{w_+}_{L^1(Q_{R,\tau_2})}\text{Tail}_{p,s,\infty}(w_+;0,r,I)^{p-1}R^{-sp}\\
            &\qquad + \left(\frac{R}{R-r}\right)^{N+s'q}\norm{w_+}_{L^1(Q_{R,\tau_2})}\text{Tail}_{q,s',\infty}(w_+;0,r,I)^{q-1}R^{-s'q}\\
            &\qquad + \int\limits_{t_0-\tau_2}^{t_0}\int\limits_{B_{R}} w_+^2(x,t)|\de_t\psi(t)|\,dx\,dt. 
        \end{align*}    
        
    The conclusion follows.     
    \end{proof}
    \begin{remark}
        In our proof, we neglected many negative terms on the right hand side. If we were to include these negative terms we would get a tighter energy bound with positive terms of the form
        \[
        \frac{p}{2} \int\limits_{t_0-\tau_1}^{t_0}\int\limits_{B_{\rho_1}}w_+(x,t)\int\limits_{B_{\rho_1}}\frac{w_{-}^{p-1}(y,t)}{|x-y|^{N+sp}} \,dx\,dy\,dt + \frac{q}{2}\int\limits_{t_0-\tau_1}^{t_0}\int\limits_{B_{\rho_1}}w_+(x,t)\int\limits_{B_{\rho_1}}a(x,y)\frac{w_{-}^{q-1}(y,t)}{|x-y|^{N+s'q}} \,dx\,dy\,dt
        \]
        on the left hand side. However, the version we prove is sufficient for our purposes.
        \end{remark}
        \begin{remark}
        The finiteness of the tail term is an additional assumption that we must make. However, such additional assumptions must be imposed even in the case $a=0$ i.e. parabolic fractional $p-$Laplacian as done in  \cite{stromqvistLocalBoundednessSolutions2019} and \cite{dingLocalBoundednessHolder2021}, unlike in the elliptic case where finiteness could be guaranteed if the corresponding tail term for the data was finite as done in \cite{byunOlderRegularityWeak2021}.
        \end{remark}

\section{De Giorgi iteration}\label{sec4}

In this section, we will prove the boundedness estimate in the case $q<p_*$. 

\subsection{Recursive estimates}
We begin by proving some recursive estimates which will be used in the proof of the boundedness estimate. We set up the notation as follows. Let $(x_0,t_0)\in \Om_T$, $R>0$, and $Q_{R,\tht}=B_R(x_0)\times (t_0-\tht,t_0)$, such that $\overline{B_R(x_0)}\subseteq \Om$ and $[t_0-\tht,t_0]\subseteq (0,T)$. 
For $\sigma \in [1/2,1)$, we define the following sequences
\begin{align}\label{seqR}
    R_j=\sigma R+(1-\sigma) R 2^{-j}\mbox{ and } \tht_j=\sigma\tht+(1-\sigma)\tht {2^{-j}},\, j=0,1,\ldots
\end{align} so that $R_0=R$, $R_\infty=\sigma R $, $\tht_0=\tht$ and $\tht_\infty=\sigma \tht$. Corresponding to these sequences, we define a sequence of nested cylinders
\begin{align}\label{seqQ}
    &Q_j(z_0)=Q_{R_j,\tht_j}(z_0)=B_j\times I_j = B_{R_j}(x_0)\times [t_0-\tht_j,t_0]\nonumber\\
    &Q_0(z_0)=Q_{R,\tht},\,Q_{\infty}=Q_{\sigma R,\sigma \tht}.
\end{align}
We also denote $\tilde{R_j}:=\frac{R_j+R_{j+1}}{2}$ and $\tilde{\tht_j}:=\frac{\tht_j+\tht_{j+1}}{2}$ so that we have the inclusions \[Q_{j+1}\subset \tilde{Q_j}:=Q_{\tilde{R_j},\tilde{\tht_j}}\subset Q_j.\]

For 
\begin{align}\label{choiceofk}
    \tk\geq \max\left\{\frac{\text{Tail}_{p,s,\infty}(u_+;x_0,\sigma R,I_0)}{2},\frac{\text{Tail}_{q,s',\infty}(u_+;x_0,\sigma R,I_0)}{2}\right\}
\end{align}
to be chosen later, we define a sequence of increasing levels as
\begin{align}\label{seqW}
    k_j=(1-2^{-j})\tk,\,\tk_j=\frac{k_j+k_{j+1}}{2},\,j=0,1,2,\ldots\nonumber\\
    w_j=(u-k_j)_+,\,\tilde{w}_j=(u-\tk_j)_+,\,j=0,1,2,\ldots
\end{align}

For $j=0,1,\ldots$, we choose the cut off functions $\eta_j\in C_0^\infty(I_j)$ satisfying
\begin{align}\label{seqtest}
    0\leq \eta_j\leq 1,\,|\de_t\eta_j|\leq C\frac{2^{spj}}{(1-\sigma)^{sp}\tht}\mbox{ in }\tilde{I}_j,\,\eta_j\equiv 1\mbox{ in }I_{j+1}.
\end{align}

\begin{lemma}\label{recurselemma}
    Let $p>1$ and $u$ be a variational solution to~\cref{maineq} in the sense of~\cref{defvar}. Further assume that
    \[u\in L^\infty(0,T;L^{p-1}_{sp}(\RR^N))\cap L^\infty(0,T;L^{q-1}_{qs'}(\RR^N)).\] Let $(x_0,t_0)\in \Om_T$, $R>0$, and $Q_{R,\tht}=B_R(x_0)\times (t_0-\tht,t_0)$, such that $\overline{B_R(x_0)}\subseteq \Om$ and $[t_0-\tht,t_0]\subseteq (0,T)$. Assume that $\delta>0$ is a number such that $\delta\geq\max\{p, q, 2\}$. Let $B_j,\,\tilde{B}_j,\,I_j,\tilde{I_j}$ be as defined in~\cref{seqQ} and~\cref{seqW}. Then we have for all $j\in\NN$
    \begin{align}\label{recest1}
        \sup\limits_{t\in I_{j+1}}&\fint\limits_{B_{j+1}}|\tilde{w}_j(\cdot,t)|^2\,dx+\int\limits_{I_{j+1}}\int\limits_{B_{j+1}}\fint\limits_{B_{j+1}}\frac{H(x,y,\tw_j(x,t)-\tw_j(y,t))}{|x-y|^N}\,dx\,dy\,dt\nonumber\\
        &\leq C\left[\frac{1}{\sigma^{ps}(1-\sigma)^{N+ps}}+\frac{1}{\sigma^{qs'}(1-\sigma)^{N+qs'}}+\frac{1}{(1-\sigma)^{q}}\right]\times\\
        &\qquad\qquad\left[\frac{2^{(N+sp+\delta-1)j}}{R^{sp}\tk^{\delta-p}}+\frac{2^{(N+qs'+\delta-1)j}}{R^{qs'}\tk^{\delta-q}}+\frac{2^{(sp+\delta-2)j}}{\tht\tk^{\delta-2}}\right]\,\int\limits_{I_j}\fint\limits_{B_j}\tw_j^\delta\,dx\,dt,
    \end{align}
\end{lemma}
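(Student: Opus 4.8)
The statement to be proven, \cref{recest1}, is a rescaled, dimensionless version of the Caccioppoli inequality \cref{Caccioppoli} adapted to the nested-cylinder / increasing-level geometry set up in \cref{seqR}--\cref{seqtest}. The overall strategy is: (i) apply \cref{Caccioppoli} on the pair of cylinders $\tilde Q_j\subset Q_j$ with level $k=\tk_j$, test function $\psi=\eta_j$ (so $\tau_1,\tau_2$ correspond to $\tilde\tht_j,\tht_j$ and $r,R$ to $\tilde R_j, R_j$), obtaining an estimate for $\sup_t\int_{B_{j+1}}|\tw_j|^2+\int_{I_{j+1}}\iint_{B_{j+1}^2}H(\tw_j)/|x-y|^N$; (ii) bound each of the five terms on the right-hand side of \cref{Caccioppoli} — the two energy terms $\|w_+\|_{L^p}^p$, $\|w_+\|_{L^q}^q$, the two tail terms, and the time term $\int w_+^2|\de_t\psi|$ — from above by a constant multiple of $\int_{I_j}\int_{B_j}\tw_j^\delta$, times the announced powers of $2^j$, $R^{-1}$, $\tht^{-1}$, $\tk^{-1}$, and the $\sigma$-factor; (iii) divide through by $|B_{j+1}|$ to pass from integrals to averages $\fint$.

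\textbf{Key steps in order.} First I would set up the geometry: on $\tilde Q_j$ vs.\ $Q_j$ we have $R_j-\tilde R_j = R_{j+1}-\tilde R_j\simeq (1-\sigma)R\,2^{-j}$ and $R_j\le R$, so each factor $R^{a(1-h)}/(R_j-\tilde R_j)^a$ and $\bigl(R_j/(R_j-\tilde R_j)\bigr)^{N+gh}R_j^{-gh}$ produces the corresponding power $2^{aj}$ or $2^{(N+gh)j}$, a factor $R^{-a h}$ or $R^{-gh}$, and a power of $(1-\sigma)$; likewise $|\de_t\eta_j|\le C2^{spj}/((1-\sigma)^{sp}\tht)$ from \cref{seqtest} gives the $2^{spj}/\tht$ and $(1-\sigma)^{-sp}$ behaviour of the time term. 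The substantive step is the \textbf{level-jump / measure-shrinking trick}: on $Q_j$ one has $\tw_j=(u-\tk_j)_+\le w_j=(u-k_j)_+$, and on the smaller set $A_+(\tk_j)\cap Q_j$ where $\tw_j>0$ one has $u-k_j\ge \tk_j-k_j = 2^{-(j+2)}\tk$, so for any exponent $m\le\delta$,
\[
 \int_{I_j}\!\int_{B_j}\! w_j^m\,dx\,dt \;\le\; \bigl(2^{-(j+2)}\tk\bigr)^{m-\delta}\!\!\int_{I_j}\!\int_{B_j}\! \tw_j^{\delta}\,dx\,dt,
\]
which converts $\|w_+\|_{L^p}^p,\|w_+\|_{L^q}^q,\|w_+\|_{L^1}$ and $\int w_+^2|\de_t\eta_j|$ (with $m=p,q,1,2$ respectively) into multiples of $\int_{I_j}\int_{B_j}\tw_j^\delta$, generating the exponents $\delta-p$, $\delta-q$, $\delta-2$ on $\tk$ and extra powers $2^{(\delta-p)j}$, $2^{(\delta-q)j}$, $2^{(\delta-2)j}$ that combine with the geometric powers above to give exactly $2^{(N+sp+\delta-1)j}$, $2^{(N+qs'+\delta-1)j}$, $2^{(sp+\delta-2)j}$. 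For the \textbf{tail terms} I would additionally use the monotonicity $\mathrm{Tail}_{p,s,\infty}(w_+;x_0,\tilde R_j,I_j)\le \mathrm{Tail}_{p,s,\infty}(u_+;x_0,\sigma R,I_0)\le 2\tk$ (the first inequality because $w_+=(u-k_j)_+\le u_+$ and the tail radius can be shrunk at the cost of a controlled constant, the second by the choice \cref{choiceofk}); hence $\mathrm{Tail}^{p-1}\le C\tk^{p-1}$, and when this multiplies $\|w_+\|_{L^1}\le (2^{-(j+2)}\tk)^{1-\delta}\int\tw_j^\delta$ we get $\tk^{p-1}\tk^{1-\delta}=\tk^{p-\delta}$, matching the claimed power $\tk^{-(\delta-p)}$ (and similarly $\tk^{-(\delta-q)}$ for the $q$-tail). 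Finally, dividing the whole inequality by $|B_{j+1}|$ and noting $|B_{j+1}|\simeq |B_j|\simeq R^N$ up to dimensional constants turns every $\int_{B}$ into $\fint_B$ and leaves the stated form; the three bracketed $\sigma$-dependent coefficients are simply the maxima of the $(1-\sigma),\sigma$ powers picked up from the four geometric terms (the $p$-energy term gives $\sigma^{-ps}(1-\sigma)^{-(N+ps)}$ after combining $R_j^{-ps}$ with the $(1-\sigma)^{-p}$ and a crude bound $R_j\ge \sigma R$, etc.), and the time term gives $(1-\sigma)^{-q}$ after bounding $(1-\sigma)^{-sp}\le(1-\sigma)^{-q}$ since $sp<q$ is not assumed but one can just keep $(1-\sigma)^{-\max\{sp,q,\dots\}}$ — one absorbs all of them into the single displayed bracket.

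\textbf{Anticipated main obstacle.} The routine part is the algebra of exponents; the one place requiring care is the \emph{tail radius mismatch}. \cref{Caccioppoli} is stated with tail radius equal to the inner radius $r=\tilde R_j$ and interval $I=I_j$, whereas the standing hypothesis \cref{choiceofk} controls the tail at radius $\sigma R$ over $I_0$. Passing from $\mathrm{Tail}(\cdot;x_0,\tilde R_j,I_j)$ to $\mathrm{Tail}(\cdot;x_0,\sigma R,I_0)$ costs a factor that, from the definition \cref{NonlocalTail}, scales like $(\tilde R_j/\sigma R)^{sm/(m-1)}$ together with a comparison of the integrals over $\RR^N\setminus B_{\tilde R_j}$ versus $\RR^N\setminus B_{\sigma R}$; since $\sigma R\le \tilde R_j\le R\le 2\sigma R$ (as $\sigma\ge 1/2$), this factor is bounded by a universal constant, and the sup over $I_j\subseteq I_0$ is only larger. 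I would isolate this comparison as a short sub-estimate at the start and then proceed purely mechanically. A secondary bookkeeping point is keeping $\delta\ge\max\{p,q,2\}$ visible throughout so that all the exponents $\delta-p,\delta-q,\delta-2$ are nonnegative and the level-jump inequality above is valid (for $\delta=p$ or $\delta=q$ or $\delta=2$ the corresponding factor is just $1$, which is still consistent with the stated bound).
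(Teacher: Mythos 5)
Your overall strategy is the paper's: apply \cref{Caccioppoli} at the intermediate level $\tk_j$ on the nested cylinders, turn the geometric factors $R_j-R_{j+1}\simeq(1-\sigma)R\,2^{-j}$ and $|\de_t\eta_j|$ into the stated powers of $2^j$, $(1-\sigma)$, $R^{-sp}$, $\tht^{-1}$, bound the tails by $\tk$ through \cref{choiceofk} (your radius-comparison remark, which costs factors $\sigma^{-sp}$ and $\sigma^{-qs'}$, is exactly where the $\sigma$-powers in the bracket come from), and use the gap $\tk_j-k_j=2^{-(j+2)}\tk$ to raise every power to $\delta$. However, the one inequality that carries the whole weight of the argument is written backwards and, as displayed, is false. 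You claim
\begin{align*}
\int_{I_j}\int_{B_j} w_j^m\,dx\,dt\;\le\;\bigl(2^{-(j+2)}\tk\bigr)^{m-\delta}\int_{I_j}\int_{B_j}\tw_j^\delta\,dx\,dt,
\end{align*}
but at any point where $k_j<u<\tk_j$ the left integrand is positive while the right one vanishes, and even on $\{u\ge\tk_j\}$ the right-hand side degenerates when $u$ is barely above $\tk_j$ (there $w_j\ge 2^{-(j+2)}\tk$ while $\tw_j$ is arbitrarily small), so no constant can repair it. The correct deduction from your own observation ($w_j\ge\tk_j-k_j$ on $\{\tw_j>0\}$ and $\tw_j\le w_j$) is the opposite one, namely $\tw_j^m\le\bigl(2^{-(j+2)}\tk\bigr)^{m-\delta}w_j^\delta$ pointwise: the higher truncation on the left, the lower truncation raised to the power $\delta$ on the right. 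This is precisely \cref{improvK} (with $q$ replaced by $\delta$), and it is what the Caccioppoli right-hand side actually requires, since the terms $\norm{\tw_j}_{L^p}^p$, $\norm{\tw_j}_{L^q}^q$, $\norm{\tw_j}_{L^1}$, $\int\tw_j^2|\de_t\eta_j|$ involve $\tw_j$, not $w_j$.

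The consequence is that the recursion obtainable by this mechanism ends with $\int_{I_j}\fint_{B_j}w_j^\delta$ on the right-hand side, not $\int_{I_j}\fint_{B_j}\tw_j^\delta$: the gain $\tk^{-(\delta-m)}$ is available only because the truncation level drops from $\tk_j$ to $k_j$ when passing to the $\delta$-power. This is also the version the paper in effect proves and then uses in the Sobolev/De Giorgi step (see how \cref{recest1} enters \cref{est5}, where the right-hand side appears with $w_j^\delta$); the tilde in the printed statement is best read as a misprint, and since $\tw_j\le w_j$ the $w_j$-version suffices for everything downstream. With the level-jump inequality corrected in this way, the rest of your outline (the $2^j$ bookkeeping, with the dominant powers $2^{(N+sp+\delta-1)j}$ and $2^{(N+qs'+\delta-1)j}$ coming from the tail terms via $m=1$, the tail bound by $\tk$, and division by $|B_{j+1}|\simeq R^N$) goes through and coincides with the paper's proof.
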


\begin{proof}
    Observe that for any $\tau\in [0,q)$, we have
    \begin{align*}
        (u-k_j)_+^{q}&\geq (u-k_j)_+^q \chi_{\{u\geq \tk_j\}}(x,t)\\
        &\geq (\tk_j-k_j)^{q-\tau}(u-k_j)_+^\tau \chi_{\{u\geq \tk_j\}}(x,t)\\
        &\geq C\,\tk^{q-\tau}\,2^{-(q-\tau)j}(u-k_j)_+^\tau \chi_{\{u\geq \tk_j\}}(x,t)\\
        &\geq C\,\tk^{q-\tau}\,2^{-(q-\tau)j}(u-k_j)_+^\tau\mbox{ in }Q_T.
    \end{align*}
    Hence, we may write
    \begin{align}\label{improvK}
        \tilde{w}^\tau_{j}(x,t)\leq C \frac{2^{(q-\tau)j}}{\tk^{q-\tau}}\,w^q_j(x,t)\mbox{ in }Q_T.
    \end{align}
    Now, in~\cref{Caccioppoli}, we take $R=R_j,\,r=R_{j+1}$, $\tau_2=\tht_i$ and $\tau_1=\tht_{i+1}$ to get the following inequality.
    \begin{align}
        \sup\limits_{t\in I_{j+1}}\fint\limits_{B_{j+1}}|\tilde{w}_j(\cdot,t)|^2\,dx&+\int\limits_{I_{j+1}}\int\limits_{B_{j+1}}\fint\limits_{B_{j+1}}\frac{H(x,y,\tw_j(x,t)-\tw_j(y,t))}{|x-y|^N}\,dx\,dy\,dt\nonumber\\
        &\leq C\underbrace{\frac{R_j^{(1-s)p}}{(R_j-R_{j+1})^p}\int\limits_{I_j}\fint\limits_{B_j}{|\tw_j(x,t)|^p}\,dx\,dt}_I\nonumber\\
        &\qquad + C\underbrace{\frac{R_j^{(1-s')q}}{(R_j-R_{j+1})^q}\int\limits_{I_j}\fint\limits_{B_j}{|\tw_j(x,t)|^q}\,dx\,dt}_{II}\nonumber\\
        &\qquad + C\underbrace{\frac{R_j^N}{(R_j-R_{j+1})^{N+sp}}\left(\text{Tail}_{p,s,\infty}(u_+;x_0,R/2,I_0)\right)^{p-1}\, \int\limits_{I_j}\fint\limits_{B_j} \tw_j(x,t)\,dx\,dt}_{III}\nonumber\\
        &\qquad + C\underbrace{\frac{R_j^N}{(R_j-R_{j+1})^{N+s'q}}\left(\text{Tail}_{q,s',\infty}(u_+;x_0,R/2,I_0)\right)^{q-1}\, \int\limits_{I_j}\fint\limits_{B_j} \tw_j(x,t)\,dx\,dt}_{IV}\nonumber\\
        &\qquad + C \underbrace{\int\limits_{I_j}\fint\limits_{B_j} \tw_j^2(x,t)|\de_t\eta(t)|\,dx\,dt}_{V}.
    \end{align}
    Now, we will proceed with estimating each of the terms on the right hand side (subsequently RHS). 

    \paragraph{Estimate for $I$ and $II$} Since the estimate for $I$ and $II$ is similar, we show for $l=p\mbox{ or }q$ and respectively $s_l=s\mbox{ or }s'$ and respectively $I_l=I\mbox{ or }II$ that
    \begin{align}\label{estI}
        I_l \leq C \frac{2^{l(j+1)}}{(1-\sigma)^{l}R^{ls_l}}\int\limits_{I_j}\fint\limits_{B_j}\tw_j^l\,dx\,dt\leq C\frac{2^{\delta(j+1)}}{(1-\sigma)^{l}\tk^{\delta-l}R^{ls_l}}\int\limits_{I_j}\fint\limits_{B_j}\tw_j^\delta\,dx\,dt,
    \end{align} where, in the first inequality, we have used~\cref{seqR}; and, in the last inequality, we have used~\cref{improvK}. 

    \paragraph{Estimate for $III$ and $IV$}

    Once again, since the estimate for $III$ and $IV$ is similar, we will provide calculations for $l=p\mbox{ or }q$ and respectively $s_l=s\mbox{ or }s'$ and respectively $A=III\mbox{ or }IV$.
    
    Notice that
    \begin{align}
        \label{est1}
        \int_{I_j}\fint_{B_j} |\tw_j(x,t)|\,dx\,dt\leq C\frac{2^{(\delta-1)j}}{\tk^{\delta-1}}\int_{I_j}\fint_{B_j} |\tw_j(x,t)|^\delta\,dx\,dt.
    \end{align}



    Using \cref{est1}, \cref{choiceofk}, and \cref{seqR} to estimate $A$, we obtain

    \begin{align}
        \label{estIII}
        A \leq C \frac{2^{(N+ls_l+\delta-1)j}}{\sigma^{ls_l}(1-\sigma)^{N+ls_l}R^{ls_l}\tk^{\delta-l}}\int_{I_j}\fint_{B_j} |\tw_j(x,t)|^\delta\,dx\,dt.
    \end{align}

    \paragraph{Estimate for $V$}
    \begin{align}\label{estV}
        V & \leq C \frac{2^{spj}}{(1-\sigma)^{sp}\tht}\int\limits_{I_j}\fint\limits_{B_j}\tw_j^2\,dx\,dt \leq C\frac{2^{(sp+\delta-2)j}}{(1-\sigma)^{sp}\tht\tk^{\delta-2}}\int\limits_{I_j}\fint\limits_{B_j}\tw_j^\delta\,dx\,dt,
    \end{align} where, in the first inequality, we have used~\cref{seqtest}; and, in the last inequality, we have used~\cref{improvK}.

    Combining the estimates \cref{estI}, \cref{estIII}, and \cref{estV}, we obtain 
    \begin{align}
        \sup\limits_{t\in I_{j+1}}&\fint\limits_{B_{j+1}}|\tilde{w}_j(\cdot,t)|^2\,dx+\int\limits_{I_{j+1}}\int\limits_{B_{j+1}}\fint\limits_{B_{j+1}}\frac{H(x,y,\tw_j(x,t)-\tw_j(y,t))}{|x-y|^N}\,dx\,dy\,dt\nonumber\\
        &\leq C \left[\frac{1}{\sigma^{ps}(1-\sigma)^{N+ps}}+\frac{1}{\sigma^{qs'}(1-\sigma)^{N+qs'}}+\frac{1}{(1-\sigma)^{q}}\right]\times\nonumber\\
        &\qquad\qquad\left[\frac{2^{(N+sp+\delta-1)j}}{R^{sp}\tk^{\delta-p}}+\frac{2^{(N+qs'+\delta-1)j}}{R^{qs'}\tk^{\delta-q}}+\frac{2^{(sp+\delta-2)j}}{\tht\tk^{\delta-2}}\right]\,\int\limits_{I_j}\fint\limits_{B_j}\tw_j^\delta\,dx\,dt
    \end{align} which is \cref{recest1}.
\end{proof}

\begin{lemma}
    Let $p>\frac{2N}{N+2s}$ and $u$ be a variational solution to \cref{maineq}. Further assume that
    \[u\in L^\infty(0,T;L^{p-1}_{sp}(\RR^N))\cap L^\infty(0,T;L^{q-1}_{qs'}(\RR^N)).\] Let $(x_0,t_0)\in \Om_T$, $R>0$, and $Q_{R,\tht}=B_R(x_0)\times (t_0-\tht,t_0)$, such that $\overline{B_R(x_0)}\subseteq \Om$ and $[t_0-\tht,t_0]\subseteq (0,T)$. Assume that $\delta>0$ is a number such that $\max\{p, 2\}\leq \delta< p\frac{N+2s}{N}$. Let $B_j,\,\tilde{B}_j,\,I_j,\tilde{I_j}$ be as defined in~\cref{seqQ} and~\cref{seqW}. Then we have for all $j\in\NN$
    \begin{align}
        \frac{1}{R^{sp}}\int\limits_{I_{j+1}}\fint\limits_{B_{j+1}} w_{j+1}^\delta(x,t)\,dx\,dt\leq C 2^{bj} \tilde{\mathcal{B}} \left( \frac{\mathcal{A}_k}{R^{sp}} \int\limits_{I_j}\fint\limits_{B_j}w_j^\delta\,dx\,dt \right)^{1+\frac{s\delta}{N\kappa}},
        \label{recest2}
    \end{align} where
    \begin{align}
        \mathcal{A}_k:= & \frac{1}{\tk^{\delta-p}}+\frac{R^{sp-qs'}}{\tk^{\delta-q}}+\frac{R^{sp}}{\tht\tk^{\delta-2}}, b=\left(1+\frac{sp}{N}\right)\left(N+\max\{ps,qs'\}+\delta\right),\mbox{ and,}\\
        \tilde{\mathcal{B}}= & \left[\frac{1}{\sigma^{ps}(1-\sigma)^{N+ps}}+\frac{1}{\sigma^{qs'}(1-\sigma)^{N+qs'}}+\frac{1}{(1-\sigma)^{q}}\right]^{\left(1+\frac{sp}{N}\right)\left(\frac{\delta}{p\kappa}\right)}.
    \end{align}
\end{lemma}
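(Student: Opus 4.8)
The plan is to combine the Caccioppoli-type estimate recorded in \cref{recurselemma} with the parabolic Sobolev inequality \cref{sobolev} to upgrade the right-hand side from an $L^\delta$ norm of $\tw_j$ on $Q_j$ to an $L^\delta$ norm of $w_{j+1}$ on $Q_{j+1}$ with a superlinear exponent. First I would apply \cref{sobolev} on the ball $B_{j+1}$ with $f=\tw_j$ and $p$ replaced by... no — rather, I would apply it with exponent $p$ and observe that $p(1+\tfrac{2s}{N})=p\kappa$, so that \cref{sobolev} gives a bound on $\int_{I_{j+1}}\fint_{B_{j+1}}\tw_j^{p\kappa}$ in terms of the product of (i) the spacetime Gagliardo energy of $\tw_j$ on $B_{j+1}$ plus its $L^p$ norm, and (ii) the $\sup_t\fint_{B_{j+1}}\tw_j^2$ term raised to the power $\tfrac{sp}{N}$. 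Both factor (i) and factor (ii) are exactly what \cref{recest1} controls: the left-hand side of \cref{recest1} dominates $\sup_t\fint_{B_{j+1}}|\tw_j|^2$ and the local Gagliardo energy $\int_{I_{j+1}}\int_{B_{j+1}}\fint_{B_{j+1}}H(\dots)/|x-y|^N$, which in turn dominates $\int_{I_{j+1}}\int_{B_{j+1}}\fint_{B_{j+1}}|\tw_j(x,t)-\tw_j(y,t)|^p/|x-y|^{N+sp}$ by the definition of $H$ (the $a$-term is nonnegative). The $L^p$-of-$\tw_j$ piece needed in factor (i) is absorbed by increasing the right-hand side of \cref{recest1} slightly, since $\tw_j^p\lesssim 2^{(\delta-p)j}\tk^{p-\delta}\tw_j^\delta$ by \cref{improvK}.

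Next I would interpolate $L^\delta$ between $L^{p\kappa}$ and $L^2$ on $Q_{j+1}$: writing $\delta = \theta (p\kappa) + (1-\theta)\cdot 2$ is not literally how it goes because we have a spacetime measure, but the standard parabolic Hölder argument (as in \cite{dibenedettoDegenerateParabolicEquations1993}) gives
\[
\int_{I_{j+1}}\fint_{B_{j+1}}\tw_j^\delta
\;\le\;
\left(\sup_{t\in I_{j+1}}\fint_{B_{j+1}}\tw_j^2\right)^{\frac{s\delta}{N\kappa}}
\left(\int_{I_{j+1}}\fint_{B_{j+1}}\tw_j^{p\kappa}\right)^{1-\frac{s\delta}{p\kappa}\cdot\frac{?}{}}
\]
— more precisely one checks that the exponent arithmetic works out precisely because $\delta < p\kappa$, which is the hypothesis $\delta < p\frac{N+2s}{N}$, and $p>\frac{2N}{N+2s}$ guarantees $p\kappa>2$ so that the interpolation is between genuinely distinct exponents. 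Combining this with \cref{sobolev} collapses both the $\sup$-in-time factor and the energy factor on the right of \cref{recest1} into a single power $1+\frac{s\delta}{N\kappa}$ of $\int_{I_j}\fint_{B_j}\tw_j^\delta$. Since $\tw_j=(u-\tk_j)_+\le w_j=(u-k_j)_+$ pointwise and $w_{j+1}=(u-k_{j+1})_+\le (u-\tk_j)_+=\tw_j$ (because $k_{j+1}>\tk_j$), one has $w_{j+1}\le\tw_j\le w_j$, so the left-hand side becomes $\int_{I_{j+1}}\fint_{B_{j+1}}w_{j+1}^\delta$ and the right-hand side is controlled by $\int_{I_j}\fint_{B_j}w_j^\delta$, as required.

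Finally I would bookkeep the constants. The bracket $[\sigma^{-ps}(1-\sigma)^{-N-ps}+\cdots]$ from \cref{recest1} gets raised to the power $(1+\frac{sp}{N})(\frac{\delta}{p\kappa})$ through the two-factor structure of \cref{sobolev} (the energy factor contributes the first power, the $\sup$ factor the second, and one collects them) — this is exactly the definition of $\tilde{\mathcal{B}}$. The $j$-dependent powers $2^{(N+sp+\delta-1)j}$, $2^{(N+qs'+\delta-1)j}$, $2^{(sp+\delta-2)j}$ combine, after raising to the relevant Sobolev powers and multiplying, into a single $2^{bj}$ with $b=(1+\frac{sp}{N})(N+\max\{ps,qs'\}+\delta)$; here one uses crudely that the largest of the three exponents is $\le N+\max\{ps,qs'\}+\delta$ and that $1+\frac{sp}{N}\ge 1$. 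The $\tk$- and $\tht$-dependent factors $\tk^{-(\delta-p)}$, $R^{sp-qs'}\tk^{-(\delta-q)}$, $R^{sp}\tht^{-1}\tk^{-(\delta-2)}$ collect into $\mathcal{A}_k$, and pulling the common $R^{-sp}$ out front (note $R^{p(1-s)}/(R-r)^p$ with $R-r\sim(1-\sigma)R2^{-j}$ produces $R^{-sp}$ times the $2$-power and the $(1-\sigma)$-power already accounted for) yields the stated form $2^{bj}\tilde{\mathcal{B}}(\mathcal{A}_k R^{-sp}\int_{I_j}\fint_{B_j}w_j^\delta)^{1+s\delta/(N\kappa)}$. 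The main obstacle I anticipate is the exponent accounting in this last step — making sure the powers of $2^j$, of the $\sigma$-bracket, and of $\mathcal{A}_k$ all land exactly as claimed, and in particular verifying that the interpolation exponent $\frac{s\delta}{N\kappa}$ is the correct one and stays in $(0,1)$ throughout the admissible range of $\delta$, which is where the hypotheses $p>\frac{2N}{N+2s}$ and $\delta<p\frac{N+2s}{N}$ are used in an essential way.
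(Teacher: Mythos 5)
Your first step (controlling $\int_{I_{j+1}}\fint_{B_{j+1}}\tw_j^{p\kappa}$ via the Sobolev inequality \cref{sobolev} with both the energy factor and the $\sup$-in-time $L^2$ factor supplied by \cref{recest1}, and absorbing the $L^p$ piece through \cref{improvK}) is exactly what the paper does. The gap is in your second step, which is the heart of the lemma and which you leave with an unresolved ``?'' in the exponent. You propose to interpolate $L^\delta$ on $Q_{j+1}$ between $L^{p\kappa}$ and ($\sup$-in-time) $L^2$. If one makes this precise by H\"older on the space-time measure, $\int\fint\tw_j^\delta\le\bigl(\int\fint\tw_j^{p\kappa}\bigr)^{\theta\delta/(p\kappa)}\bigl(\int\fint\tw_j^{2}\bigr)^{(1-\theta)\delta/2}$ with $\tfrac{\theta\delta}{p\kappa}+\tfrac{(1-\theta)\delta}{2}=1$, then after inserting the Sobolev--Caccioppoli bound (power $1+\tfrac{sp}{N}$) for the first factor and the linear bound from \cref{improvK} for the second, the total power of $\int_{I_j}\fint_{B_j}w_j^\delta$ is $1+\theta\,\tfrac{s\delta}{N\kappa}$ with $\theta=\tfrac{(\delta-2)p\kappa}{\delta(p\kappa-2)}<1$, not the claimed $1+\tfrac{s\delta}{N\kappa}$; worse, when $\delta=2$ (admissible whenever $p\le 2$) one gets $\theta=0$ and no superlinear gain at all, so the De Giorgi iteration driven by \cref{geo_con} would not close. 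The constant bookkeeping ($\mathcal{A}_k$ to the power $1+\tfrac{s\delta}{N\kappa}$, the powers of $2^j$ and of the $\sigma$-bracket) also cannot be matched along this route.

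The mechanism you are missing is the level-set measure. The paper's proof does not interpolate against $L^2$ at all: it applies H\"older between $L^{p\kappa}$ and the characteristic function of $\{u\ge\tk_j\}$, namely $\int\fint\tw_j^\delta\le\bigl(\int\fint\tw_j^{p\kappa}\bigr)^{\delta/(p\kappa)}\bigl(\int\fint\chi_{\{u\ge\tk_j\}}\bigr)^{1-\delta/(p\kappa)}$ as in \cref{est2.5}, and then converts the indicator factor by the Chebyshev-type inequality \cref{improvK}, $\chi_{\{u\ge\tk_j\}}\lesssim 2^{\delta j}\tk^{-\delta}w_j^\delta$. It is precisely this second factor that supplies the extra power $(1-\tfrac{\delta}{p\kappa})$ of $\int w_j^\delta$, so that the total exponent becomes $\bigl(1+\tfrac{sp}{N}\bigr)\tfrac{\delta}{p\kappa}+\bigl(1-\tfrac{\delta}{p\kappa}\bigr)=1+\tfrac{s\delta}{N\kappa}$, and which produces the $\tk^{-\delta}$-type gains collected in $\mathcal{A}_k$. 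Without using the measure of the level set, the superlinear structure of \cref{recest2} as stated cannot be recovered, so the proposal as written does not prove the lemma.
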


\begin{proof}
    Let $\kappa=1+\dfrac{2s}{N}$. Since $\delta<p\kappa$, we have by H\"older inequality that
    \begin{align}
        \int\limits_{I_{j+1}}\fint\limits_{B_{j+1}} w_{j+1}^\delta(x,t)\,dx\,dt& \leq \int\limits_{I_{j+1}}\fint\limits_{B_{j+1}} \tw_{j}^\delta(x,t)\,dx\,dt\nonumber\\
        &\leq \left(\underbrace{\int\limits_{I_{j+1}}\fint\limits_{B_{j+1}} \tw_{j}^{p\kappa}(x,t)\,dx\,dt}_{(i)}\right)^{\frac{\delta}{p\kappa}}\left(\underbrace{\int\limits_{I_{j+1}}\fint\limits_{B_{j+1}} \chi_{\{u\geq \tk_j\}}(x,t)\,dx\,dt}_{(ii)}\right)^{1-\frac{\delta}{p\kappa}}.\label{est2.5}
    \end{align} 
    Observe that by \cref{improvK}, we have
    \begin{align}
        \label{est3}
        (ii)\leq C \frac{2^{\delta j}}{\tk^\delta}\int\limits_{I_{j}}\fint\limits_{B_{j}} w_{j}^\delta(x,t)\,dx\,dt,\mbox{ and}
    \end{align}
    \begin{align}
        \label{est4}
        \int\limits_{I_{j+1}}\fint\limits_{B_{j+1}} \tw_{j}^p(x,t)\,dx\,dt\leq C \frac{2^{(\delta-p) j}}{\tk^{\delta-p}}\int\limits_{I_{j}}\fint\limits_{B_{j}} w_{j}^\delta(x,t)\,dx\,dt.
    \end{align}
    On the other hand, by Sobolev embedding \cref{sobolev},~\cref{est4}, and \cref{recest1}, we obtain
    \begin{align}
        \label{est5}
        (i)\leq &C R^{sp} \left(\int\limits_{I_{j+1}}\int\limits_{B_{j+1}}\fint\limits_{B_{j+1}}\frac{|\tw_j(x,t)-\tw_j(y,t)|^p}{|x-y|^{N+sp}}\,dx\,dy\,dt+\frac{1}{R^{sp}}\int\limits_{I_{j+1}}\fint\limits_{B_{j+1}}|\tw_j(x,t)|^p\,dx\,dt\right)\nonumber\\
        &\qquad\times\left(\sup_{t\in I_{j+1}}\fint_{B_{j+1}}|\tw_j(x,t)|^2\,dx\right)^{\frac{sp}{N}}\nonumber\\
        &\leq C R^{sp} \left[\frac{2^{(N+sp+\delta-1)j}}{R^{sp}\tk^{\delta-p}}+\frac{2^{(N+qs'+\delta-1)j}}{R^{qs'}\tk^{\delta-q}}+\frac{2^{(\delta-1)j}}{\tht\tk^{\delta-2}}\right]^{1+\frac{sp}{N}}\,\left(\mathcal{B}\int\limits_{I_j}\fint\limits_{B_j}w_j^\delta\,dx\,dt\right)^{1+\frac{sp}{N}}\nonumber\\
        &= C R^{sp} \left[\frac{2^{(N+sp+\delta-1)j}}{\tk^{\delta-p}}+\frac{2^{(N+qs'+\delta-1)j}R^{sp}}{R^{qs'}\tk^{\delta-q}}+\frac{2^{(\delta-1)j}R^{sp}}{\tht\tk^{\delta-2}}\right]^{1+\frac{sp}{N}}\,\left(\frac{\mathcal{B}}{R^{sp}}\int\limits_{I_j}\fint\limits_{B_j}w_j^\delta\,dx\,dt\right)^{1+\frac{sp}{N}}\nonumber\\
        &\leq C 2^{bj} R^{sp} \left( \frac{\mathcal{B}\mathcal{A}_k}{R^{sp}} \int\limits_{I_j}\fint\limits_{B_j}w_j^\delta\,dx\,dt \right)^{1+\frac{sp}{N}},
    \end{align} where
    \begin{align*}
        \mathcal{A}_k&:= \frac{1}{\tk^{\delta-p}}+\frac{R^{sp-qs'}}{\tk^{\delta-q}}+\frac{R^{sp}}{\tht\tk^{\delta-2}},\, b=\left(1+\frac{sp}{N}\right)\left(N+\max\{ps,qs'\}+\delta\right), \mbox{ and }\\
        \mathcal{B}&:= \left[\frac{1}{\sigma^{ps}(1-\sigma)^{N+ps}}+\frac{1}{\sigma^{qs'}(1-\sigma)^{N+qs'}}+\frac{1}{(1-\sigma)^{q}}\right].
    \end{align*}
    Substituting \cref{est3} and \cref{est5} in \cref{est2.5}, we are given
    \begin{align*}
        \frac{1}{R^{sp}}\int\limits_{I_{j+1}}\fint\limits_{B_{j+1}} w_{j+1}^\delta(x,t)\,dx\,dt\leq C 2^{bj}\tilde{\mathcal{B}} \left( \frac{\mathcal{A}_k}{R^{sp}} \int\limits_{I_j}\fint\limits_{B_j}w_j^\delta\,dx\,dt \right)^{1+\frac{s\delta}{N\kappa}},
    \end{align*} which is \cref{recest2} for $\tilde{\mathcal{B}}=\left[\frac{1}{\sigma^{ps}(1-\sigma)^{N+ps}}+\frac{1}{\sigma^{qs'}(1-\sigma)^{N+qs'}}+\frac{1}{(1-\sigma)^{q}}\right]^{\left(1+\frac{sp}{N}\right)\left(\frac{\delta}{p\kappa}\right)}$.
\end{proof}

\subsection{Boundedness estimate}

\begin{proof}(Proof of \cref{mainthm1})
 Let us set 
 \begin{align}
     Y_j:= \frac{1}{R^{sp}}\int\limits_{I_j}\fint\limits_{B_j}w_j^\delta\,dx\,dt.
 \end{align}
 Then \cref{recest2} becomes
 \begin{align}\label{recest3}
     Y_{j+1}\leq C 2^{bj}\tilde{\mathcal{B}} \left( \mathcal{A}_k Y_j \right)^{1+\frac{s\delta}{N\kappa}}.
 \end{align}
 Further taking $\tk >1$ and $\tau:=\min\{\delta-q,\delta-p,\delta-2\}$, we get that 
 \begin{align*}
     \mathcal{A}_k\leq \frac{\mathcal{A}}{\tk^\tau} \mbox{ where } 
     \mathcal{A}:= 1+R^{sp-qs'}+\frac{R^{sp}}{\tht}.
 \end{align*}

 Hence \cref{recest3} becomes
 \begin{align}\label{recest4}
     Y_{j+1}\leq C 2^{bj} \tilde{\mathcal{B}}\left(\frac{\mathcal{A}}{\tk^\tau} Y_j\right)^{1+\frac{s\delta}{N\kappa}}.
 \end{align}

Choose $\tht=R^{sp}$. Let $\tk$ be chosen so that

 \begin{align*}
        \tk \geq \max\Biggl\{ \text{Tail}_{p,s,\infty}(u_+;x_0,R/2,I_0), \text{Tail}_{q,s',\infty}(u_+;x_0,R/2,I_0),\nonumber\\
         C 2^{\frac{bN^2\kappa^2}{s\delta\tau(N\kappa+s\delta)}} \tilde{\mathcal{B}}^{\frac{N\kappa}{\tau(N\kappa+s\delta)}}\mathcal{A}^{\frac{1}{\tau}} \left(\fint\limits_{t_0-R^{sp}}^{t_0}\fint\limits_{B_R(x_0)} u_+^\delta\,dx\,dt \right)^{\frac{s\delta}{\tau(N\kappa+s\delta)}}\wedge 1 
        \Biggr\},
 \end{align*} where the constant $C$ depends on $N,s,p,\delta,s',q$. This choice guarantees by \cref{geo_con} that
 \begin{align*}
     \underset{Q_{\frac{R}{2},\frac{R^{sp}}{2}}}{\text{ess sup}} \, u  \,\leq\, &\text{Tail}_{p,s,\infty}(u_+;x_0,R/2,t_0-R^{sp},t_0) + \text{Tail}_{q,s',\infty}(u_+;x_0,R/2,t_0-R^{sp},t_0)\\
     & + C 2^{\frac{bN^2\kappa^2}{s\delta\tau(N\kappa+s\delta)}} \tilde{\mathcal{B}}^{\frac{N\kappa}{\tau(N\kappa+s\delta)}}\mathcal{A}^{\frac{1}{\tau}} \left(\fint\limits_{t_0-R^{sp}}^{t_0}\fint\limits_{B_R(x_0)} u_+^\delta \,dx\,dt \right)^{\frac{s\delta}{\tau(N\kappa+s\delta)}}\wedge 1,
 \end{align*} which is~\cref{boundest}.
\end{proof}

\section{Limit case}\label{sec5}

In this section, we consider the limit case $q=p_*$ in order to complete the proof of \cref{mainthm1}. We only indicate the modifications required. In this case, we choose the sequence $k_j$ as
\begin{align}
    k_j:=\tk\left(1-\frac{1}{2^{j+1}}\right),\mbox{ and } Y_j:= \frac{1}{R^{sp}}\int\limits_{I_j}\fint\limits_{B_j}w_j^q\,dx\,dt.
\end{align} while keeping the rest of the sequences $R_j, \tht_j$ the same as before. 

By the same calculations as before, we will find that the iterative inequality \cref{recest4} becomes

\begin{align}
    Y_{j+1}\leq C 2^{bj}\tilde{\mathcal{B}} \left(\mathcal{A} Y_j\right)^{1+\frac{s\delta}{N\kappa}},
\end{align} and hence the dependence on $\tk$ is only in $Y_j$. 

In order to apply \cref{geo_con}, we need $Y_0$ to be sufficiently small, however since we have redefined the sequence $k_j$, we have
\begin{align}
    Y_0= \fint_{t_0-R^{sp}}^{t_0}\fint_{B_R(x_0)} \left(u-\frac{\tk}{2}\right)_+^q\,dx\,dt
\end{align}
which can be made as small as we like provided $\tk$ is big enough. Hence, we get boundedness of $u$ in this case, but without an explicit bound. This completes the proof of \cref{mainthm1}.

\section{Subcritical Case}\label{sec6}

The proof of \cref{mainthm2} will be completed in four steps. If \descref{H1}{H1} holds, then we begin by proving qualitative boundedness using the higher integrability of $u$. Then, we prove an improved estimate by exploiting the local boundedness of $u$, subsequently, we perform a third iteration to get an estimate only in terms of the $L^r$ norm of $u$. In the fourth step, we deal with the limiting case $q=p_*$. If \descref{H2}{H2} holds, then we do not need to perform Step 1, we can directly go to steps $2$, $3$, and $4$.

\begin{proof}[Proof of \cref{mainthm2}]

    \underline{Step 1.} Assume that \descref{H1}{H1} holds. In this step, we will prove the qualitative fact that $u$ is locally bounded. We will assume that $q<p_*$. The limiting case is dealt with in Step 4. Notice that $p_*=p\frac{2s+N}{N}\leq 2$. As before, let $(x_0,t_0)\in \Om_T$, $R>0$, and $Q_{R,\tht}=B_R(x_0)\times (t_0-\tht,t_0)$, such that $\overline{B_R(x_0)}\subseteq \Om$ and $[t_0-\tht,t_0]\subseteq (0,T)$. Define the quantities $R_j, \tht_j, \tilde{R}_J,\tilde{\tht}_j, \tilde{k},k_j,w_j$ and the cylinders $Q_j, \tilde{Q}_j$ as before, as in \cref{seqR}, \cref{seqQ}, \cref{choiceofk}, \cref{seqW}. In this step, we will choose $\tht=R^{sp}$ and $\sigma=1/2$. Let $\alpha\in (0,1)$ be such that $1=\frac{2\alpha}{q}+\frac{2(1-\alpha)}{r}$.

    Define \begin{align*}
        Y_j:=\fint\limits_{I_j}\fint\limits_{B_j} w_j^2\,dx\,dt\mbox{ and }M:=\left(\fint\limits_{I_j}\fint\limits_{B_j}|u|^r\,dx\,dt\right)^{2\frac{1-\alpha}{r}}.
    \end{align*}

    Then, 
    \begin{align}
    Y_{j+1}&\leq \fint\limits_{I_{j+1}}\fint\limits_{B_{j+1}} \tilde{w}_j^2\,dx\,dt\nonumber\\
    &\leq \left(\fint\limits_{I_{j+1}}\fint\limits_{B_{j+1}} \tilde{w}_j^q\,dx\,dt\right)^{\frac{2\alpha}{q}}\left(\fint\limits_{I_{j+1}}\fint\limits_{B_{j+1}} \tilde{w}_j^r\,dx\,dt\right)^{\frac{2(1-\alpha)}{r}}\nonumber\\
    &= M \left(\fint\limits_{I_{j+1}}\fint\limits_{B_{j+1}} \tilde{w}_j^q\,dx\,dt\right)^{\frac{2\alpha}{q}}\nonumber\\
    &\leq M \left\{\left(\underbrace{\fint\limits_{I_{j+1}}\fint\limits_{B_{j+1}} \tw_{j}^{p\kappa}(x,t)\,dx\,dt}_{(i)}\right)^{\frac{q}{p\kappa}}\left(\underbrace{\fint\limits_{I_{j+1}}\fint\limits_{B_{j+1}} \chi_{\{u\geq \tk_j\}}(x,t)\,dx\,dt}_{(ii)}\right)^{1-\frac{q}{p\kappa}}\right\}^{\frac{2\alpha}{q}}.\label{estsub1}
    \end{align}
    
    By a calculation exactly similar to the one performed to obtain \cref{est5}, we obtain 
    \begin{align}\label{estsub2}
        (i)\leq C 2^{\tilde{b} j}\left(\mathcal{B}\fint\limits_{I_j}\fint\limits_{B_j} w_j^2\,dx\,dt\right)^{1+\frac{sp}{N}},
    \end{align} where $\mathcal{B}:=2+R^{sp-qs'}$ and $\tilde{b}:=\kappa(N+\max\{sp,qs'\}+2)$.
    On the other hand, by \cref{improvK}, we have
    \begin{align}
        \label{estsub3}
        (ii)\leq C \frac{2^{2 j}}{\tk^2}\fint\limits_{I_{j}}\fint\limits_{B_{j}} w_{j}^2(x,t)\,dx\,dt.
    \end{align}
    Substituting \cref{estsub2} and \cref{estsub3} in \cref{estsub1}, we obtain
    \begin{align*}
        Y_{j+1}\leq M\left\{ \left( C 2^{\tilde{b} j}\mathcal{B} Y_j \right)^{\left(1+\frac{sp}{N}\right)\frac{q}{p_*}} \left( \frac{C 2^{2j}}{\tk^2} Y_j\right)^{1-\frac{q}{p_*}} \right\}^{\frac{2\alpha}{q}}\leq \frac{CM 2^{\tilde{c}j}}{\tk^{2\left(\frac{p_*-q}{p_*}\right)\left(\frac{2\alpha}{q}\right)}} Y_j^{\left(1+\frac{sq}{\kappa N}\right)\left(\frac{2\alpha}{q}\right)}.
    \end{align*}
    In order to apply \cref{iterlemma}, we need $\gamma:=\left(1+\frac{sq}{\kappa N}\right)\left(\frac{2\alpha}{q}\right)-1>0$ which is justified when \cref{restiction3} of \descref{H1}{H1} holds. Therefore, taking $\tk$ large enough, we obtain boundedness of $u$. However, the quantitative bound obtained in this manner is not optimal in the exponents. Therefore, in subsequent steps, we will do further iterations. When the hypothesis \descref{H2}{H2} holds, we do not require this step.

    \underline{Step 2.} This step is common to both \descref{H1}{H1} and \descref{H2}{H2}. However, the value of the exponent $r$ is to be understood contextually. Moreover, for \descref{H2}{H2}, we simply write $u$ instead of $u_k$.

    Now, for any $\sigma\in [1/2,1)$ and for any number $\rho$ satisfying $\sigma R\leq\sigma \rho<\rho\leq R$, we define the following sequences
\begin{align}\label{seqRsub}
    \rho_j=\sigma\rho+(1-\sigma)\rho 2^{-j},\, j=0,1,\ldots
\end{align} so that $\rho_0=\rho$, $\rho_\infty=\sigma \rho$. Corresponding to these sequences, we define a sequence of nested cylinders
\begin{align}\label{seqQsub}
    &Q_j(z_0)=B_j\times I_j = B_{\rho_j}(x_0)\times [t_0-\rho_j^{sp},t_0]
\end{align}
We also denote $\tilde{\rho_j}:=\frac{\rho_j+\rho_{j+1}}{2}$ so that we have the inclusions \[Q_{j+1}\subset \tilde{Q_j}:=B_{\tilde\rho_j}(x_0)\times [t_0-\tilde\rho_j^{sp},t_0]\subset Q_j.\]

For 
\begin{align}\label{choiceofksub}
    \tk\geq \max\left\{\frac{\text{Tail}_{p,s,\infty}(u_+;x_0,\sigma\rho,t_0-\rho^{sp},t_0)}{2},\frac{\text{Tail}_{q,s',\infty}(u_+;x_0,\sigma\rho,t_0-\rho^{sp},t_0)}{2}\right\}
\end{align}
to be chosen later, we define a sequence of increasing levels as
\begin{align}\label{seqWsub}
    k_j=(1-2^{-j})\tk,\,\tk_j=\frac{k_j+k_{j+1}}{2},\,j=0,1,2,\ldots\nonumber\\
    w_j=(u-k_j)_+,\,\tilde{w}_j=(u-\tk_j)_+,\,j=0,1,2,\ldots
\end{align}

    Define $Z_j:=\fint_{I_j}\fint_{B_j}w_j^r\,dx\,dt$. Then, by a calculation similar to the one performed to obtain \cref{est5}, we get 
    \begin{align*}
        Z_{j+1}&\leq \fint_{I_{j+1}}\fint_{B_{j+1}} \tw_j^r\,dx\,dt\\
        &\leq ||u||_{L^\infty(Q_{\rho,\rho^{sp}}(z_0))}^{r-p_*}\fint_{I_{j+1}}\fint_{B_{j+1}}\tw^{p_*}\,dx\,dt\\
        &\leq C 2^{b'j}||u||_{L^\infty(Q_{\rho,\rho^{sp}}(z_0))}^{r-p_*}\left(\frac{\mathcal{A}}{\tk^\tau} Z_j\right)^{1+\frac{sp}{N}},
    \end{align*} where
    \begin{align*} 
        \mathcal{A}:=&(2+R^{sp-qs'})\left[\frac{1}{\sigma^{ps}(1-\sigma)^{N+ps}}+\frac{1}{\sigma^{qs'}(1-\sigma)^{N+qs'}}+\frac{1}{(1-\sigma)^{q}}\right],\\
        b'=&\left(1+\frac{sp}{N}\right)\left(N+\max\{ps,qs'\}+r\right),\mbox{ and,}\\
        \tau=&\min\{r-p,r-q,r-2\}.
    \end{align*}

By an application of \cref{iterlemma}, we get 
\begin{align}\label{subest7}
    \underset{Q_{\sigma\rho,(\sigma\rho)^{sp}}}{\text{ess sup}} \, u  \,\leq\, &\frac{\text{Tail}_{p,s,\infty}(u_+;x_0,\sigma\rho,t_0-\rho^{sp},t_0)}{2}+ \frac{\text{Tail}_{q,s',\infty}(u_+;x_0,\sigma\rho,t_0-\rho^{sp},t_0)}{2}\nonumber\\
    &\qquad + C' 2^{\frac{b'N}{(r-2)(N+sp)}}\mathcal{A}^{\frac{sp}{(r-2)N}}\left(\fint\fint\limits_{Q_{\rho,\rho^{sp}}}u^r\,dx\,dt\right)^{\frac{sp}{(r-2)(N+sp)}} \norm{u}_{L^\infty(Q_{\rho,\rho^{sp}})}^{\frac{N(r-p_*)}{(r-2)(N+sp)}}\nonumber\\
    \,\leq\, &\frac{\text{Tail}_{p,s,\infty}(u_+;x_0,R/2,t_0-R^{sp},t_0)}{2}+ \frac{\text{Tail}_{q,s',\infty}(u_+;x_0,R/2,t_0-R^{sp},t_0)}{2}\nonumber\\
    &\qquad + C' 2^{\frac{b'N}{(r-2)(N+sp)}}\mathcal{A}^{\frac{sp}{(r-2)N}}\left(\fint\fint\limits_{Q_{R,R^{sp}}}u^r\,dx\,dt\right)^{\frac{sp}{(r-2)(N+sp)}} \norm{u}_{L^\infty(Q_{\rho,\rho^{sp}})}^{\frac{N(r-p_*)}{(r-2)(N+sp)}}.
\end{align}

\underline{Step 3.} 

It remains to push the $L^\infty$ norm to the left in the estimate above. Once more, the numbers $\rho$ and $R$ is arbitrary such that $Q_{\rho,\rho^{sp}}(z_0)\subset Q_{R,R^{sp}}(z_0)\Subset \Om_T$.

Therefore, we can write \cref{subest7} for a sequence $\rho_n=R/2+\sum_{i=0}^n 2^{-i-2} R$ with $M_n:=\norm{u}_{L^\infty(Q_{\rho_n,\rho_n^{sp}})}$.  

To conclude the result, we shall apply \cref{interlope_lemma} with 
\begin{align*}
    \mathcal{K}=& \frac{\text{Tail}_{p,s,\infty}(u_+;x_0,R/2,t_0-R^{sp},t_0)}{2}+ \frac{\text{Tail}_{q,s',\infty}(u_+;x_0,R/2,t_0-R^{sp},t_0)}{2},\\
    C=& C'2^{\frac{b'}{(r-2)(N+sp)}}\left(\fint\fint\limits_{Q_{R,R^{sp}}}u^r\,dx\,dt\right)^{\frac{sp}{(r-2)(N+sp)}},\\
    1-\alpha=&\frac{N(r-p_*)}{(r-2)(N+ps)},\\
    b=&2^{\frac{sp(N+ps+qs'+q)}{(r-2)N}},
\end{align*} and use the fact that \[r>\frac{N(2-p)}{p}\] is equivalent to \[\frac{N(r-p_*)}{(r-2)(N+ps)}<1.\] 

We have also used the fact that \[\sigma=\frac{\rho_n}{\rho_{n+1}}=1-\frac{1}{2^{n+2}+2^{n+1}-1}.\] so that
\[\frac{1}{1-\sigma}=2^{n+1}+2^{n+2}-1<2^{n+3}.\]
As a result, we obtain the inequality
\begin{align*}
    \underset{Q_{R/2,(R/2)^{sp}}}{\text{ess sup}} \, u  \,\leq\, &\text{Tail}_{p,s,\infty}(u_+;x_0,R/2,t_0-R^{sp},t_0) + \text{Tail}_{q,s',\infty}(u_+;x_0,R/2,t_0-R^{sp},t_0)\nonumber\\
    &\qquad\qquad+C\left(\fint\fint\limits_{Q_{R,R^{sp}}}u^r\,dx\,dt\right)^{\frac{sp}{(r-2)(N+sp)-N(r-p_*)}},
\end{align*} where the constant $C$ depends on $N, s, s', p, q, r, M$. This completes the proof of the boundedness estimate when $q<p_*$.

\underline{Step 4.} We omit the proof in the limit case of $q=p_*$ since it is the same as in \cref{sec5}.
\end{proof}

\section*{Acknowledgments} 
The authors would like to thank Karthik Adimurthi for introducing us to this subject and for illuminating discussions. The authors were supported by the Department of Atomic Energy,  Government of India, under	project no.  12-R\&D-TFR-5.01-0520. 

\bibliography{MyLibrary}

\end{document}